\newtheorem{theorem}{Theorem}[section]
\newtheorem{lemma}[theorem]{Lemma}
\newtheorem{example}[theorem]{Example}
\newtheorem{problem}{Problem}
\newtheorem{conjecture}[problem]{Conjecture}
\newtheorem{remark}{Remark}[section]
\title{A note on permutation polynomials over finite fields}
\author{
Jingxue Ma$^{\text{a}}$ and Gennian Ge$^{\text{b,c,}}$\thanks{The research of G. Ge was supported by the National Natural Science Foundation of China under Grant Nos. 11431003 and 61571310, Beijing Hundreds of Leading Talents Training Project of Science and Technology, and Beijing Municipal Natural Science Foundation.}\\
  \footnotesize $^{\text{a}}$ School of Mathematical Sciences, Zhejiang University, Hangzhou 310027, Zhejiang, China\\
  \footnotesize $^{\text{b}}$ School of Mathematical Sciences, Capital Normal University, Beijing, 100048, China\\
\footnotesize $^{\text{c}}$ Beijing Center for Mathematics and Information
Interdisciplinary Sciences,
Beijing, 100048, China}
\begin{document}

\date{}\maketitle
\begin{abstract}
Permutation polynomials over finite fields constitute an active research area and have applications in many areas of science and engineering. In this paper, two conjectures on permutation polynomials proposed recently by Wu and Li \cite{WuL} are settled. Moreover, a new class of permutation trinomials of the form $x+\gamma \textup{Tr}_{q^n/q}(x^k)$ is also presented, which generalizes two examples of \cite{kyureghyan2016}.

\medskip
\noindent {{\it Keywords and phrases\/}: Permutation polynomial, trinomial, trace function.
}\\
\smallskip

\noindent {{\it Mathematics subject classifications\/}: 11T06, 11T55, 05A05.}
\end{abstract}

\section{Introduction}
Let $\mathbb{F}_{p^{n}}$ be a finite field with $p^{n}$ elements, where $p$ is a prime and $n$ is a positive integer. A polynomial
$f(x)\in\mathbb{F}_{p^{n}}[x]$ is called a permutation polynomial (PP) if the associated mapping $f:c\mapsto f(c)$ from $\mathbb{F}_{p^{n}}$ to itself is a bijection. PPs have been intensively studied in recent years due to their important applications in cryptography, coding theory and combinatorial design theory (see \cite{Ding06,dobb1999,D99,L07,QTTL13,ST05} and the references therein). Recently, the study of PPs with few terms, especially binomials and trinomials, has attached people's interest due to their simple algebraic form and additional extraordinary properties.

In \cite{WuL}, Wu and Li presented several classes of permutation trinomials over $\mathbb{F}_{5^n}$ from Niho exponents with the form of
\begin{equation}\label{eq1.1}
f(x)=x+\lambda_1 x^{s(5^k-1)+1}+\lambda_2 x^{t(5^k-1)+1},
\end{equation}
where $n=2k, 1\leq s,t\leq 5^k,$ and $\lambda_1, \lambda_2\in\{1,-1\}.$ In the same paper, they also proposed the following two conjectures, which can be used to obtain two new classes of permutation trinomials with the form \eqref{eq1.1}. More recent progress on permutation trinomials can be found in \cite{DQWYY15,Gupta2016,Hou2014,Hou2015,H15,kyureghyan2016,LQC2017,Likuanquan2016New,Linian2016,LH2016,Ma2016,Zha2017}.

\begin{conjecture}
The polynomial $f(x)=x\big(\frac{x^2-x+2}{x^2+x+2}\big)^2$ is a PP over $\mathbb{F}_{5^k}$ for odd $k.$
\end{conjecture}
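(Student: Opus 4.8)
The strategy is to prove that $f$ is injective on $\mathbb F_{5^k}$; since this is a self‑map of a finite set, injectivity already gives that $f$ is a permutation. The hypothesis that $k$ is odd enters first through elementary field theory: as $\mathbb F_{25}\not\subseteq\mathbb F_{5^k}$, the elements $2$ and $3=-2$ are non‑squares in $\mathbb F_{5^k}$, $-1$ is a square but not a fourth power, and $\mathbb F_{5^k}$ contains no primitive cube root of unity. In particular the discriminant $-7\equiv3\pmod5$ of $x^2\pm x+2$ is a non‑square, so $x^2+x+2$ and $x^2-x+2$ are irreducible over $\mathbb F_{5^k}$; hence $f$ is a well‑defined function $\mathbb F_{5^k}\to\mathbb F_{5^k}$ with $f(0)=0$ and no poles. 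Write $f(x)=x\,g(x)^2$ with $g(x)=\frac{x^2-x+2}{x^2+x+2}$. Then $g(-x)=g(x)^{-1}$, so $f(x)f(-x)=-x^2$; moreover $g$ depends on $x$ only through $v=v_x:=x+2/x$, since $g(x)=\frac{v-1}{v+1}$. Now assume $f(a)=f(b)$ with $a,b\in\mathbb F_{5^k}$. If $a=0$ or $b=0$ then $a=b=0$, so take $a,b\neq0$. If $v_a=v_b$ then $b\in\{a,2/a\}$, and $b=2/a$ is impossible (it would force $a=2/a$, i.e.\ $a^2=2$), so $a=b$. It remains to treat the case $v_a\neq v_b$, which is the heart of the matter.

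Clearing denominators, $f(a)=f(b)$ amounts to
\[
P(a,b):=a(a^2-a+2)^2(b^2+b+2)^2-b(b^2-b+2)^2(a^2+a+2)^2=0 ,
\]
and since $P$ vanishes on the diagonal we factor $P(a,b)=(a-b)\,Q(a,b)$ with $Q$ symmetric of bidegree $(4,4)$; what must be shown is that $Q$ has no zero $(a,b)\in\mathbb F_{5^k}^2$ with $a\neq b$. The plan is to prove that $Q$ is reducible over $\mathbb F_{25}$, with a factorization $Q=\lambda\,Q_1Q_2$ into forms of bidegree $(2,2)$ interchanged by $\mathrm{Frob}_5$ (one expects this, because a permutation polynomial of this degree should come from a degree‑$5$ rational map with small monodromy; $Q_1,Q_2$ are found by an explicit, if lengthy, computation). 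Granting this, for $a,b\in\mathbb F_{5^k}$ with $k$ odd the map $\mathrm{Frob}_{5^k}$ swaps $Q_1$ and $Q_2$, so $Q(a,b)=0\iff Q_1(a,b)=0$, and any such zero is a common zero of the non‑proportional quadratics $Q_1(a,\cdot)$ and $Q_2(a,\cdot)$ over $\mathbb F_{5^{2k}}$; their resultant in $b$ is a nonzero polynomial $R(a)\in\mathbb F_5[a]$, so for every $a$ with $R(a)\neq0$ there is no $b\in\mathbb F_{5^k}$ at all with $Q(a,b)=0$. Finally one examines the finitely many exceptional $a$ — roots of $R$ and of the leading coefficient of $Q_1(a,\cdot)$ — and checks that each such $a$ lying in $\mathbb F_{5^k}$ yields only $b=a$. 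Here the oddness of $k$ is used again: a short discriminant computation (e.g.\ in the coordinate $w=\frac{v-1}{v+1}$ a relevant discriminant is $3w^4(w^2+w+1)$, whose square‑ness for $w\in\mathbb F_{5^k}$ would force $w^2+w+1$ to be a non‑square) shows the exceptional locus sits in an even‑degree extension of $\mathbb F_5$, so over $\mathbb F_{5^k}$ it collapses onto the diagonal. This proves injectivity.

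Thus the two genuine obstacles are: (i) producing and verifying the factorization $Q=\lambda Q_1Q_2$ over $\mathbb F_{25}$; and (ii) the book‑keeping over the exceptional fibres, i.e.\ confirming that no collision $f(a)=f(b)$ with $a\neq b$ survives in $\mathbb F_{5^k}$ for odd $k$ — which is where the hypothesis on $k$ is used a second time. A cleaner packaging of the same argument transfers the problem to the unit circle: the Cayley map $x\mapsto\frac{x+\gamma}{x-\gamma}$ with $\gamma^2=3$ conjugates $f$ on $\mathbb F_{5^k}\cup\{\infty\}$ to $\tilde f(u)=\frac{u(u^4+au^2+b)}{au^4+bu^2+1}$ on $\mu_{q+1}\subset\mathbb F_{q^2}$, $q=5^k$, where $a=1+2\gamma$, $b=1-2\gamma$, so that $a+b=2$, $ab=-1$, $\bar a=b$; one then checks $\tilde f(u)=u^{-3}\cdot z/\bar z$ with $z=u^4+au^2+b$, whence $\tilde f$ maps $\mu_{q+1}$ into itself, and the claim becomes that $u^5-awu^4+au^3-bwu^2+bu-w$ has exactly one root in $\mu_{q+1}$ for each $w\in\mu_{q+1}$ — to be analyzed by the same reducibility‑plus‑descent mechanism.
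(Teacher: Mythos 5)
Your preliminary reductions are fine (irreducibility of $x^2\pm x+2$ for odd $k$, the invariant $v=x+2/x$, disposing of the case $v_a=v_b$), but the proof stops exactly where the theorem lives. The two steps you yourself label ``the two genuine obstacles'' are never carried out: (i) the factorization $Q=\lambda Q_1Q_2$ over $\mathbb{F}_{25}$ into Frobenius-conjugate forms of bidegree $(2,2)$ is only asserted on heuristic grounds (``one expects this'') --- no $Q_1,Q_2$ are exhibited, no verification that they are non-proportional, no computation of the resultant $R(a)$ and no proof that it is nonzero; and (ii) the analysis of the exceptional fibres is reduced to an unexplained discriminant $3w^4(w^2+w+1)$ and the unproved claim that the exceptional locus ``sits in an even-degree extension.'' Since everything after ``Granting this'' is conditional on (i), and the conclusion of injectivity further depends on (ii), what you have is a plausible strategy outline rather than a proof. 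The Cayley-map repackaging at the end is likewise only a reformulation, ending with a statement ``to be analyzed.''

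For comparison, the paper's argument avoids your global factorization problem by an elementary splitting: it writes $\mathbb{F}_{5^k}^*=\Omega_1\cup\Omega_2$ with $\Omega_1=\{x^2\}$, $\Omega_2=\{2x^2\}$, notes $f(\Omega_i)\subseteq\Omega_i$, and proves injectivity on each class. Substituting $x=a^2$, $y=b^2$ (resp.\ $2a^2$, $2b^2$) and setting $c=ab$, $d=a\mp b$, the collision condition becomes a quadratic in $z=d^2$ whose discriminant is $2(c^2\mp c-2)^2$ (resp.\ $3(\cdot)^2$), a non-square for odd $k$ unless the inner factor vanishes; the finitely many exceptional values of $c$ are then killed by checking that the resulting systems $xy=\alpha$, $x+y=\beta$ correspond to irreducible quadratics over $\mathbb{F}_{5^k}$. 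In effect the paper realizes, after the square substitution, exactly the $\sqrt{2}$-splitting over $\mathbb{F}_{25}$ that you conjecture for $Q$, but in a form that is explicit and verifiable by hand. If you want to rescue your route, you must actually produce and verify $Q_1,Q_2$ (e.g.\ by adapting the paper's computation back through the substitution $x=a^2$) and then do the exceptional-case bookkeeping in full; as written, the argument has a genuine gap at its central step.
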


\begin{conjecture}
Let $q=5^k,$ where $k$ is an even integer. Then $g(x)=-x\big(\frac{x^2-2}{x^2+2}\big)^{2}$ permutates $\mu_{q+1}.$
\end{conjecture}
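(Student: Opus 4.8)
The plan is to transport the statement from the ``unit circle'' $\mu_{q+1}\subset\mathbb{F}_{q^2}$ to an equivalent assertion over $\mathbb{F}_q$ by a Möbius substitution, and then to settle injectivity there by a quadratic--residue computation in which the hypothesis that $k$ is even is decisive. One first checks that $g$ really maps $\mu_{q+1}$ into itself: since $k$ is even, $q+1=5^k+1\equiv 2\pmod 8$, so $8\nmid q+1$ and $\mu_{q+1}$ contains no element of order $8$; in particular $x^4+1=(x^2-2)(x^2+2)$ has no zero in $\mu_{q+1}$, so the denominator of $g$ never vanishes there. For $x\in\mu_{q+1}$ we have $x^q=x^{-1}$, and a one--line computation in $\mathbb{F}_5$ (using $-4=1$, $16=1$) gives $(x^2-2)^q=-2x^{-2}(x^2+2)$ and $(x^2+2)^q=2x^{-2}(x^2-2)$, whence $g(x)g(x)^q=1$, i.e. $g(x)\in\mu_{q+1}$. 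As $\mu_{q+1}$ is finite it then suffices to prove $g$ is injective on it.

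Fix a nonsquare $\delta\in\mathbb{F}_q$ and $a\in\mathbb{F}_{q^2}$ with $a^2=\delta$ (so $a^q=-a$), and use the bijection $\phi\colon\mathbb{P}^1(\mathbb{F}_q)\to\mu_{q+1}$, $\phi(t)=\frac{t+a}{t-a}$, with inverse $y\mapsto a\frac{y+1}{y-1}$. Computing $G:=\phi^{-1}\circ g\circ\phi$, one finds (many intermediate coefficients are divisible by $5$, so the expression collapses)
\[
 G(t)=-\,t\left(\frac{t^2-\eta}{t^2+\eta}\right)^{2},\qquad \eta:=2\delta .
\]
Because $k$ is even, $2$ is a square in $\mathbb{F}_q$, so $\eta$ is a nonsquare; hence $t^2+\eta\neq0$ for all $t\in\mathbb{F}_q$, the map $G$ has no pole on $\mathbb{F}_q$ and fixes $\infty$, and therefore $g$ permutes $\mu_{q+1}$ if and only if $G$ permutes $\mathbb{F}_q$.

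To show $G$ injective, suppose $G(t_1)=G(t_2)$; as $G(t)=0$ only for $t=0$, the case $t_1t_2=0$ forces $t_1=t_2$, so assume $t_1,t_2\neq0$. Dividing the equal values gives $t_1/t_2=\mu^2$ with $\mu:=\frac{(t_2^2-\eta)(t_1^2+\eta)}{(t_1^2-\eta)(t_2^2+\eta)}\in\mathbb{F}_q^{*}$; substituting $t_1^2=t_2^2\mu^4$ back into this definition and clearing denominators, everything factors as
\[
 (\mu-1)\Big[\mu^4T^2+\eta(\mu+1)^2(\mu^2+1)T-\eta^2\Big]=0,\qquad T:=t_2^2 .
\]
If $\mu=1$ then $t_1=t_2$, so the whole problem reduces to showing the bracketed quadratic in $T$ has no root that is a nonzero square of $\mathbb{F}_q$. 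Its roots are $T_{\pm}=\frac{\eta}{2\mu^4}\,u_{\pm}$, where $u_{\pm}:=-(\mu+1)^2(\mu^2+1)\pm\sqrt{D(\mu)}$ and $D(\mu):=(\mu+1)^4(\mu^2+1)^2+4\mu^4$, and $T_{\pm}\in\mathbb{F}_q$ exactly when $D(\mu)$ is a square.

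The crux is a short chain of residue deductions resting on the identities
\[
 D(\mu)=(\mu^2+\mu+1)^2\big(\mu^4+2\mu^3+\mu^2+2\mu+1\big),\qquad \mu^2+(\mu+1)^2(\mu^2+1)=(\mu^2+\mu+1)^2 .
\]
Since $k$ is even, $2$, $-1$, $-2$ are squares in $\mathbb{F}_q$ while $\eta$ is not. From $u_+u_-=-4\mu^4=\mu^4$, $u_+$ and $u_-$ have the same quadratic character; writing $c:=(u_++u_-)/\mu^2$ and $V:=u_+/\mu^2$ (so $V+V^{-1}=c$), completing the square shows $u_+$ is a square iff both $c-2=-2(\mu^2+\mu+1)^2/\mu^2$ and $c+2=2\big(\mu^2-(\mu+1)^2(\mu^2+1)\big)/\mu^2$ are squares; the former always is, and the latter --- by the displayed identities --- is a square exactly when $D(\mu)$ is, i.e. exactly when $T_{\pm}\in\mathbb{F}_q$. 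Hence whenever the bracket has roots in $\mathbb{F}_q$ the $u_{\pm}$ are squares, so (as $\eta/2\mu^4$ is a nonsquare) the $T_{\pm}$ are nonsquares; the degenerate case $D(\mu)=0$ is handled uniformly, since then $u_{\pm}=\pm\mu^2$ and $T_{\pm}=\pm\eta/2\mu^2$ is again a nonsquare. Thus the bracket never equals $T=t_2^2$, $G$ is injective, and the theorem follows. The hard part --- and the only step where the parity of $k$ is genuinely used --- is this last computation: discovering the factorization of $D(\mu)$ and running the residue bookkeeping; the Möbius reduction and the membership check in the first paragraph are routine.
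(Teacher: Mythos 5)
Your proof is correct, and it takes a genuinely different route from the paper's. I checked the key computations: the Möbius conjugate of $g$ by $\phi(t)=\frac{t+a}{t-a}$ really does collapse (mod $5$) to $G(t)=-t\big(\frac{t^2-\eta}{t^2+\eta}\big)^2$ with $\eta=2a^2$ a nonsquare of $\mathbb{F}_q$; the factorization $(\mu-1)\big[\mu^4T^2+\eta(\mu+1)^2(\mu^2+1)T-\eta^2\big]$, the identity $D(\mu)=(\mu^2+\mu+1)^2(\mu^4+2\mu^3+\mu^2+2\mu+1)$, and the relation $c+2=-2D(\mu)/\big(\mu^2(\mu^2+\mu+1)^2\big)$ all hold in characteristic $5$, and the quadratic-character chain is sound (in the relevant branch $D\neq 0$ forces $\mu^2+\mu+1\neq 0$, hence $c\pm2\neq0$ and $V\neq\pm1$, so the ``iff'' from completing the square is legitimate; the degenerate case $D=0$ is covered as you say). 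The paper instead stays inside $\mu_{q+1}\subset\mathbb{F}_{q^2}$: it splits the circle as $\Omega_+=\{x^2\}$ and $\Omega_-=\{-x^2\}$, shows $g$ stabilizes each piece, and proves injectivity on each by writing $x=\pm a^2$, $y=\pm b^2$, passing to $c=ab$, $d=a\mp b$, using the conjugation $\bar{x}=x^{-1}$ on $\mu_{q+1}$ to force $c^2=1$, and finally excluding the residual systems $xy=1$, $x+y=\pm1$ via the fact that $\sqrt{\pm2}\in\mathbb{F}_q$. What your approach buys is a transfer of the whole problem to $\mathbb{F}_q$, where ordinary quadratic-residue bookkeeping applies and where the striking self-similarity of the map (same shape with $2$ replaced by a nonsquare) becomes visible; what the paper's approach buys is avoidance of the heavier discriminant factorization and a treatment parallel to its proof of Conjecture~1, with the parity of $k$ entering only through $8\mid(q-1)$ and $\gcd(4,q+1)=2$. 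Both arguments are elementary and complete.
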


This paper is organized as follows. In Section~\ref{preliminaries}, we introduce some basic notations and related results. In Sections~\ref{conj2} and \ref{conj1}, we prove the above two conjectures by using different tricks, such as treating the squares and non-squares separately. In Section~\ref{trapp}, we give a new class of PPs of the form $x+\gamma \textup{Tr}_{q^n/q}(x^k).$ Section~\ref{conclusion} concludes the paper.

\section{Preliminaries}\label{preliminaries}
The following notations are fixed throughout this paper.
\begin{itemize}
  \item Let $q$ be a prime power, $n$ be an integer, and $\mathbb{F}_{q^n}$ be the finite field of order $q^{n}$.
  \item Let $\textup{Tr}_{r}^{n}\ :\ \mathbb{F}_{q^{n}}\mapsto\mathbb{F}_{q^{r}}$ be the trace mapping defined by
  $$\textup{Tr}_{r}^{n}(x)=x+x^{q^{r}}+x^{q^{2r}}+\cdots+x^{q^{n-r}},$$
  where $r|n$. For $r=1$, we get the absolute trace function, which is denoted by $\textup{Tr}_{n}$.
  \item Let $\overline{x}=x^q$ for any $x\in \mathbb{F}_{q^{2}}$.
\end{itemize}

Now, we recall a well-known lemma which will be needed in the following sections.
\begin{lemma}\cite{Dickson1906}\label{lem2.1}
$x^2+ax+b$ is irreducible over $\mathbb{F}_{p^n},$ $p>2$, if and only if its discriminant $\Delta=a^2-4b$ is a non-square element in $\mathbb{F}_{p^n}.$
\end{lemma}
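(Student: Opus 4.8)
The plan is to use the elementary fact that a degree-two polynomial over any field is reducible precisely when it splits into two linear factors, equivalently when it has a root in the field, and then to detect such a root by completing the square. Completing the square is legitimate here because $\mathrm{char}\,\mathbb{F}_{p^n}=p>2$, so $2$ (and hence $4$) is invertible in $\mathbb{F}_{p^n}$.

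First I would write, in $\mathbb{F}_{p^n}[x]$,
\[
x^2+ax+b=\Bigl(x+\tfrac{a}{2}\Bigr)^2-\tfrac{a^2-4b}{4}=\Bigl(x+\tfrac{a}{2}\Bigr)^2-\tfrac{\Delta}{4}.
\]
Since $x\mapsto x+\tfrac{a}{2}$ is a bijection of $\mathbb{F}_{p^n}$, the polynomial has a root in $\mathbb{F}_{p^n}$ if and only if the equation $y^2=\tfrac{\Delta}{4}$ is solvable in $\mathbb{F}_{p^n}$, i.e.\ if and only if $\tfrac{\Delta}{4}$ is a square in $\mathbb{F}_{p^n}$; and because $4=2^2$ is a nonzero square, $\tfrac{\Delta}{4}$ is a square exactly when $\Delta$ is a square. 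Second, I would pair this with the reducibility criterion for quadratics: a factorization $x^2+ax+b=(x-\alpha)(x-\beta)$ with $\alpha,\beta\in\mathbb{F}_{p^n}$ exhibits the root $\alpha$, while conversely a root $\alpha$ gives $x^2+ax+b=(x-\alpha)\bigl(x-(-a-\alpha)\bigr)$. Hence $x^2+ax+b$ is reducible over $\mathbb{F}_{p^n}$ iff it has a root there iff $\Delta$ is a square; taking the contrapositive yields that it is irreducible iff $\Delta$ is a non-square, which is the claim.

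There is essentially no obstacle — the statement is classical and the proof is two or three lines. The only points deserving a word of care are the hypothesis $p>2$, which is exactly what makes completing the square available (in characteristic $2$ the criterion fails and must be replaced by an Artin--Schreier-type condition on $b/a^2$), and the degenerate case $\Delta=0$, where the polynomial has the repeated root $-a/2$ and is therefore reducible, consistent with reading ``non-square element'' as excluding $0$.
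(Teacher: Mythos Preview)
Your proof is correct and is the standard elementary argument via completing the square; all the care points (invertibility of $2$, the $\Delta=0$ case) are handled appropriately. The paper itself does not supply a proof of this lemma at all --- it merely recalls it as a well-known result and cites Dickson~\cite{Dickson1906} --- so there is no ``paper's own proof'' to compare against, and your write-up would serve perfectly well as an inserted justification.
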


\section{Proof of Conjecture 2}\label{conj2}
In this section, we will prove the following theorem, which is a conjecture posed by Wu et al. \cite{WuL}. Let us set initially that $\mu_{q+1}=\{x\in\mathbb{F}_{q^2}: x^{q+1}=1\},$ where $q=5^k$ with $k$ being an even integer.
\begin{theorem}\cite[Conjecture 2]{WuL}\label{thm3.1}
Let $q=5^k,$ where $k$ is an even integer. Then $g(x)=-x\big(\frac{x^2-2}{x^2+2}\big)^{2}$ permutates $\mu_{q+1}.$
\end{theorem}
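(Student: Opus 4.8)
The plan is to move the problem off the norm-one group and onto the affine line over $\mathbb{F}_q$, where squares can be separated from non-squares. Write $\mathbb{F}_{q^2}=\mathbb{F}_q(\theta)$ with $\theta^2=\delta$ for a fixed non-square $\delta\in\mathbb{F}_q$; then $\theta^q=\delta^{(q-1)/2}\theta=-\theta$, and the Cayley-type map $\psi\colon \mathbb{F}_q\cup\{\infty\}\to\mu_{q+1}$, $\psi(s)=\frac{s+\theta}{s-\theta}$ (with $\psi(\infty)=1$), is a bijection because $\psi(s)^q=\frac{s-\theta}{s+\theta}=\psi(s)^{-1}$. Substituting $x=\psi(s)$ into $g$ and simplifying in $\mathbb{F}_{q^2}$, using $(s\pm\theta)^2=(s^2+\delta)\pm 2s\theta$ and repeatedly reducing coefficients modulo $5$ (so $4\equiv-1$, $-4\equiv1$, $5\equiv 0$), I expect the numerator and denominator to come out as perfect squares, giving
\[
g\big(\psi(s)\big)=\psi\big(S(s)\big),\qquad S(s):=-\,s\left(\frac{s^{2}-2\delta}{s^{2}+2\delta}\right)^{2}.
\]
Since $k$ is even, $2$ (hence $-2=3$) is a square in $\mathbb{F}_q$ while $\delta$ is not, so $s^{2}\pm 2\delta$ never vanish on $\mathbb{F}_q$; thus $S$ is a genuine self-map of $\mathbb{F}_q$ that also fixes $\infty$, and $g$ permutes $\mu_{q+1}$ if and only if $S$ permutes $\mathbb{F}_q$.

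The key point is that $S$ preserves square classes. For $s\neq 0$ we have $S(s)/s=-\big((s^{2}-2\delta)/(s^{2}+2\delta)\big)^{2}$, which is $-1$ times a nonzero square; and $-1$ is a square in $\mathbb{F}_q$ since $q=5^{k}\equiv 1\pmod 4$. Hence $S(s)$ and $s$ always lie in the same square class of $\mathbb{F}_q^{*}$, and $S(0)=0$. Therefore $S$ restricts to self-maps of the set $C_0$ of nonzero squares and of the set $C_1$ of non-squares, so it suffices to prove $S$ is injective on each of $C_0$ and $C_1$ — this is the step in which squares and non-squares are treated separately.

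For $C_0$, write $s=u^{2}$ with $u\in\mathbb{F}_q^{*}$; then $S(u^{2})=-R(u)^{2}$ where $R(u)=\frac{u(u^{4}-2\delta)}{u^{4}+2\delta}$ is an odd rational function (well defined, as $-2\delta$ is a non-square). So $S(u^{2})=S(w^{2})$ forces $R(u)=\pm R(w)=R(\pm w)$, and it is enough to show $R$ is injective on $\mathbb{F}_q$. Clearing denominators in $R(u)=R(w)$ and extracting the factor $u-w$, the residual symmetric factor simplifies, in the variables $m=uw$ and $p=u^{2}+w^{2}$, to $m^{4}-\delta mp+2\delta p^{2}+\delta^{2}$. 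Read as a quadratic in $p$, its discriminant is $\delta^{2}m^{2}-8\delta(m^{4}+\delta^{2})=2\delta(m^{2}-\delta)^{2}$; since $\delta$ is a non-square we have $m^{2}\neq\delta$, and $2\delta$ is a non-square, so this discriminant is a non-square and the quadratic has no root in $\mathbb{F}_q$ (Lemma~\ref{lem2.1}). Hence $u=w$, and $S$ is injective on $C_0$. The case $C_1$ is parallel: every non-square has the form $\delta u^{2}$ with $u\in\mathbb{F}_q^{*}$, and $S(\delta u^{2})=-\delta R_1(u)^{2}$ with $R_1(u)=\frac{u(\delta u^{4}-2)}{\delta u^{4}+2}$ odd; the same manipulation yields a quadratic in $p$ with discriminant $2\delta(\delta m^{2}-1)^{2}$, again a non-square. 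This gives injectivity on $C_1$, so $S$ permutes $\mathbb{F}_q$ and the theorem follows.

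The only computation requiring real care is the one producing the clean form $S(s)=-s\big((s^{2}-2\delta)/(s^{2}+2\delta)\big)^{2}$; it is routine, but the characteristic-$5$ reductions must be tracked carefully. The genuine crux is that, after removing the trivial factor $u-w$, the residual equations in both cases are honestly \emph{quadratic} in $p=u^{2}+w^{2}$ with discriminants of the shape (non-square)$\,\times\,$(square). This is exactly where the characteristic (several coefficients become divisible by $5$, and $4\equiv-1$) and the parity of $k$ (which pins down the square class of $2$, hence of $2\delta$) are used essentially; were either to fail, the residual loci would not split this way and the square/non-square dichotomy would not close the argument.
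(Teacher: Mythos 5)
Your proposal is correct, and the computations you flagged but did not fully write out do check out: conjugating $g$ by $\psi(s)=\frac{s+\theta}{s-\theta}$ indeed yields exactly $S(s)=-s\big(\frac{s^2-2\delta}{s^2+2\delta}\big)^2$ (the cross terms carry coefficients divisible by $5$), the residual symmetric factors are $m^4-\delta mp+2\delta p^2+\delta^2$ and $\delta^2m^4-\delta mp+2\delta p^2+1$, and their discriminants in $p$ are $2\delta(m^2-\delta)^2$ and $2\delta(\delta m^2-1)^2$, which are non-squares because $2$ is a square for even $k$ while $\delta$ is not. However, your route is genuinely different from the paper's. The paper stays inside $\mu_{q+1}$: it splits $\mu_{q+1}=\Omega_+\sqcup\Omega_-$ with $\Omega_\pm=\{\pm x^2:x\in\mu_{q+1}\}$, shows $g$ preserves each piece, and proves injectivity on each by writing $g(a^2)=g(b^2)$, passing to $c=ab$, $d=a\mp b$, solving a quadratic for $d^2$ over $\mathbb{F}_{q^2}$ (using $\sqrt{\pm2}\in\mathbb{F}_q$), and then applying the Frobenius together with $\overline{a}=1/a$ to force $c^2=1$, after which the exceptional cases $c=\pm1$ are killed by a separate lemma about $xy=1$, $x+y=\pm1$ having no solutions in $\Omega_\pm$. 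You instead transport the whole problem to $\mathbb{P}^1(\mathbb{F}_q)$ via the M\"obius parametrization of $\mu_{q+1}$, observe that $g$ is conjugate to a map of the same shape over $\mathbb{F}_q$, and then run the square/non-square split and the discriminant criterion (Lemma~\ref{lem2.1}) entirely inside $\mathbb{F}_q$. What your approach buys: no Frobenius manipulations are needed, the discriminants are non-squares outright, so there is no analogue of the paper's exceptional subcases $c^2=1$ and no auxiliary lemma like Lemma~\ref{lem3.1}; it also makes transparent where evenness of $k$ (square class of $2$) and the characteristic enter. What the paper's approach buys: it avoids the initial conjugation computation and works with the natural squaring structure of $\mu_{q+1}$ itself, which parallels the method reused in Section~\ref{conj1}. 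To make your write-up complete you should simply include the verification of the identity $g(\psi(s))=\psi(S(s))$ rather than stating it as an expectation, since it is the load-bearing step.
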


Before proving this conjecture, we need to show the following lemmas.

\begin{lemma}\label{lem0}
Let $q=5^k,$ where $k$ is an even integer. Then $\pm2$ are square elements in $\mathbb{F}_{q^2}.$ Moreover, $\sqrt{\pm2}\in \mathbb{F}_q.$
\end{lemma}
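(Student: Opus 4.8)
The plan is to reduce the whole statement to the subfield $\mathbb{F}_{25}=\mathbb{F}_{5^{2}}$, which is contained in $\mathbb{F}_q$ precisely because $k$ is even, and to exhibit square roots of $2$ and $-2$ there.

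First I would record the relevant quadratic residues modulo $5$: the nonzero squares of $\mathbb{F}_5$ are $1$ and $4=-1$, so $2$ and $-2\equiv 3\pmod 5$ are both \emph{non}-squares in $\mathbb{F}_5$, whereas $-1=2^{2}$ \emph{is} a square in $\mathbb{F}_5$. The key step is then to pass to the quadratic extension: for every $a\in\mathbb{F}_5^{*}$ we have $a^{4}=1$, hence $a^{(25-1)/2}=a^{12}=1$, so by Euler's criterion $a$ is a square in $\mathbb{F}_{25}$; equivalently, by Lemma~\ref{lem2.1}, $x^{2}-2$ has discriminant $8$, a non-square in $\mathbb{F}_5$, so it is irreducible over $\mathbb{F}_5$ and therefore splits over $\mathbb{F}_{25}$. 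Either way there is $\delta\in\mathbb{F}_{25}$ with $\delta^{2}=2$, and since $-1=2^{2}$ in $\mathbb{F}_5$ we also get $(2\delta)^{2}=4\cdot 2=-2$, so $\sqrt{2}=\delta$ and $\sqrt{-2}=2\delta$ both lie in $\mathbb{F}_{25}$.

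Finally, since $k$ is even we have $2\mid k$, hence $\mathbb{F}_{25}=\mathbb{F}_{5^{2}}\subseteq\mathbb{F}_{5^{k}}=\mathbb{F}_q\subseteq\mathbb{F}_{q^{2}}$; therefore $\sqrt{\pm2}\in\mathbb{F}_q$, which is the ``moreover'' part, and in particular $\pm2$ are square elements of $\mathbb{F}_{q^{2}}$. There is essentially no obstacle here: the argument is a short character/subfield computation, and the only points needing care are the tower $\mathbb{F}_{25}\subseteq\mathbb{F}_q$ (this is exactly where the evenness of $k$ enters) and the observation that $-1$ is a square modulo $5$, which is what puts $\sqrt{-2}$ in the same small field as $\sqrt{2}$.
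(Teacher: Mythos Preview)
Your proof is correct, and it takes a genuinely different route from the paper. The paper works inside the cyclic group $\mathbb{F}_{q^{2}}^{*}=\langle\omega\rangle$: from $2^{2}=-1$ it identifies $2$ and $-2$ with $\omega^{(q^{2}-1)/4}$ and $\omega^{3(q^{2}-1)/4}$, takes the obvious square roots $\omega^{(q^{2}-1)/8}$ and $\omega^{3(q^{2}-1)/8}$, and then checks they lie in $\mathbb{F}_q$ by computing their $(q-1)$-th powers, using that $8\mid q-1$ when $k$ is even. Your argument instead bypasses the index bookkeeping by observing that the evenness of $k$ gives the subfield tower $\mathbb{F}_{25}\subseteq\mathbb{F}_q$, and then settles everything inside $\mathbb{F}_{25}$ via Euler's criterion (or, equivalently, the irreducibility of $x^{2}-2$ over $\mathbb{F}_5$). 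The two arguments are logically equivalent---your subfield inclusion is exactly what the divisibility $8\mid q-1$ is encoding---but yours is more concrete and avoids the slightly delicate identification of $2$ with a specific power of the primitive element, while the paper's version makes the exponent structure visible in a way that matches how $\sqrt{-2}$ is used later in the proof of Lemma~\ref{lem3}.
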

\begin{proof}
Let $\mathbb{F}_{q^2}^*=\langle \omega \rangle.$ Noting that $8|(q^2-1)$ and $-1=\omega^{\frac{q^2-1}{2}}$, we have that $2=\omega^{\frac{q^2-1}{4}}$ and $-2=\omega^{\frac{3(q^2-1)}{4}}$ are two square elements in $\mathbb{F}_{q^2}.$
Write $\sqrt{2}=\omega^{\frac{q^2-1}{8}},$ $\sqrt{-2}=\omega^{\frac{3(q^2-1)}{8}}$. Noting that $8|(q-1)$ since $k$ is an even integer. Thus
$(\sqrt{2})^{q-1}=(-1)^{\frac{q-1}{4}}=1,$ and $(\sqrt{-2})^{q-1}=(-1)^{\frac{3(q-1)}{4}}=1,$ which imply that $\sqrt{2}\in \mathbb{F}_q$, $\sqrt{-2}\in \mathbb{F}_q$.
\end{proof}

Let $\Omega_{+}=\{x^2:x\in\mu_{q+1}\}$, $\Omega_{-}=\{-x^2:x\in\mu_{q+1}\}$.
\begin{lemma}\label{lem1}
$\Omega_{+}\cap \Omega_{-}=\varnothing$, $\Omega_{+}\cup \Omega_{-}=\mu_{q+1}$.
\end{lemma}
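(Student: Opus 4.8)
The key structural fact is that $\mu_{q+1}$ is a cyclic group of even order $q+1$ (since $q=5^k$ is odd), so the squaring map $x\mapsto x^2$ on $\mu_{q+1}$ is two-to-one onto the unique index-$2$ subgroup $H=\{y\in\mu_{q+1}: y^{(q+1)/2}=1\}$, which has order $(q+1)/2$. Thus $\Omega_+=H$. The plan is then to identify $\Omega_-$ as the nontrivial coset of $H$ in $\mu_{q+1}$; once that is done, both assertions follow immediately, since a subgroup and its complementary coset are disjoint and together exhaust the group.

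To carry this out, first I would observe $\Omega_-=-\Omega_+=-H$, so it suffices to show $-1\notin H$, i.e. $-1$ is a non-square in $\mu_{q+1}$. Equivalently, $(-1)^{(q+1)/2}\neq 1$, which holds \Iff $(q+1)/2$ is odd, i.e. \Iff $q\equiv 1\pmod 4$. Since $k$ is even, $q=5^k\equiv 1\pmod 4$ (indeed $5\equiv1\pmod4$), so $(q+1)/2$ is odd and $(-1)^{(q+1)/2}=-1\neq1$; hence $-1\notin H$. Therefore $\Omega_-=-H$ is the coset $H^c=\mu_{q+1}\setminus H$, giving $\Omega_+\cap\Omega_-=H\cap H^c=\varnothing$ and $\Omega_+\cup\Omega_-=H\cup H^c=\mu_{q+1}$.

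Alternatively — and this is probably the cleaner route if one wants to avoid invoking the subgroup structure explicitly — one can argue directly: if $x^2=-y^2$ with $x,y\in\mu_{q+1}$, then $(x/y)^2=-1$, so $-1$ would be a square in $\mu_{q+1}$; multiplying by $(q+1)/2$ in the exponent (using that every element of $\mu_{q+1}$ has order dividing $q+1$) and the parity argument above yields a contradiction, proving disjointness. For the union, note $|\Omega_+|=|\Omega_-|=(q+1)/2$ since squaring is exactly two-to-one on the cyclic group $\mu_{q+1}$ of even order, so $|\Omega_+\cup\Omega_-|=q+1=|\mu_{q+1}|$ forces equality of sets.

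I do not expect any real obstacle here; the only point requiring care is the parity of $(q+1)/2$, which is exactly where the hypothesis "$k$ even" (equivalently $q\equiv1\pmod4$) is used — the same hypothesis that made $\pm2$ squares in Lemma 2.4. It is worth stating explicitly that without $k$ even the lemma can fail, so the reader sees where the assumption enters.
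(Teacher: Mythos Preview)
Your proof is correct, and your ``alternative'' route is essentially the paper's own argument: the paper supposes $x_1^2=-x_2^2$ with $x_i\in\mu_{q+1}$, deduces $(x_1/x_2)^4=1$, and combines this with $(x_1/x_2)^{q+1}=1$ to get $(x_1/x_2)^{\gcd(4,q+1)}=(x_1/x_2)^2=1$, a contradiction; the union then follows from the cardinality count $|\Omega_+|=|\Omega_-|=(q+1)/2$. Your parity argument on $(q+1)/2$ and the paper's $\gcd(4,q+1)=2$ are the same observation in different clothing, and your coset formulation is just a structural repackaging of it.

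One correction to your closing commentary: the hypothesis ``$k$ even'' is \emph{not} used in this lemma, and your assertion that ``without $k$ even the lemma can fail'' is false. As you yourself note parenthetically, $5\equiv 1\pmod 4$, so $q=5^k\equiv 1\pmod 4$ and hence $(q+1)/2$ is odd (equivalently $\gcd(4,q+1)=2$) for \emph{every} $k$, odd or even. The evenness of $k$ is needed elsewhere in Section~\ref{conj2} (for $\sqrt{\pm2}\in\mathbb{F}_q$ in Lemma~\ref{lem0}), but not here; you should remove that remark.
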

\begin{proof}
If $\Omega_{+}\cap \Omega_{-}\neq \varnothing,$ that is $\exists x_1^2=-x_2^2$ with $x_i\in \mu_{q+1}$. We have $\big( \frac{x_1}{x_2}\big)^2=-1$, which means $\big( \frac{x_1}{x_2}\big)^4=1$. Since $\big( \frac{x_1}{x_2}\big)^{q+1}=1$, it follows that $\big( \frac{x_1}{x_2}\big)^{\gcd(4, q+1)}=1$. Hence, $\big( \frac{x_1}{x_2}\big)^2=1$, this leads to a contradiction.

Clearly, $|\Omega_{+}|=|\Omega_{-}|=\frac{q+1}{2}$. It follows that $\Omega_{+}\cup \Omega_{-}=\mu_{q+1}$.
\end{proof}

\begin{lemma}\label{lem2}
$g(\Omega_{+})\subseteq \Omega_{+}$ and $g(\Omega_{-})\subseteq \Omega_{-}$.
\end{lemma}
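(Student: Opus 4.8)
The plan is to view $g$ as a product of factors and keep track of which of the two ``halves'' $\Omega_+,\Omega_-$ of $\mu_{q+1}$ each factor lies in. Recall that $q$ is odd, so $\mu_{q+1}$ is cyclic of even order $q+1$ and its set of squares $\Omega_+=\{y\in\mu_{q+1}:y^{(q+1)/2}=1\}$ is a subgroup of index $2$; combining this with Lemma~\ref{lem1} (which gives $\Omega_+\cap\Omega_-=\varnothing$, $\Omega_+\cup\Omega_-=\mu_{q+1}$, $|\Omega_\pm|=\tfrac{q+1}{2}$), $\Omega_-$ is exactly the nontrivial coset $-\Omega_+$. Since $q\equiv1\pmod4$, the integer $(q+1)/2$ is odd, so $(-1)^{(q+1)/2}=-1$ and hence $-1\in\Omega_-$. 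Thus the coset rules $\Omega_+\Omega_-=\Omega_-$ and $\Omega_-\Omega_-=\Omega_+$ are available, and the whole problem reduces to locating, for $x\in\mu_{q+1}$, the factors of $g(x)=(-1)\cdot x\cdot\big(\tfrac{x^2-2}{x^2+2}\big)^2$.

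First I would check that $g$ is defined on $\mu_{q+1}$ and sends it into $\mu_{q+1}$, i.e. that $x^2+2\neq0$, $x^2-2\neq0$, and $x^4+1\neq0$ for $x\in\mu_{q+1}$. By Lemma~\ref{lem0} we have $\sqrt{\pm2}\in\mathbb F_q$, so $x^2=\pm2$ would put $x$ in $\mathbb F_q$, giving $x^{q-1}=1$; together with $x^{q+1}=1$ and $\gcd(q-1,q+1)=2$ this forces $x^2=1$, a contradiction. The same style of argument rules out $x^4=-1$: then $x^{\gcd(8,\,q+1)}=1$, and since $4\nmid q+1$ this again gives $x^2=1$, impossible.

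The heart of the argument is to show that $w:=\tfrac{x^2-2}{x^2+2}$ satisfies $w^{q+1}=-1$, so that $w^2\in\Omega_-$. For $x\in\mu_{q+1}$ we have $x^q=x^{-1}$ and $2^q=2$, hence $w^q=\tfrac{x^{-2}-2}{x^{-2}+2}=\tfrac{1-2x^2}{1+2x^2}$, and therefore
\[
w^{q+1}=\frac{(x^2-2)(1-2x^2)}{(x^2+2)(1+2x^2)}=\frac{-2(x^4+1)}{2(x^4+1)}=-1,
\]
where the middle step uses $5=0$ in $\mathbb F_q$ to kill the $5x^2$ cross term appearing in each of $(x^2-2)(1-2x^2)$ and $(x^2+2)(1+2x^2)$, and the last step uses $x^4+1\neq0$. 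This cancellation, special to characteristic $5$, is the one genuinely nontrivial ingredient; everything else is coset bookkeeping. From $w^{q+1}=-1$ we get $(w^2)^{(q+1)/2}=w^{q+1}=-1$, i.e. $\big(\tfrac{x^2-2}{x^2+2}\big)^2\in\Omega_-$.

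Finally, with $-1\in\Omega_-$ and $\big(\tfrac{x^2-2}{x^2+2}\big)^2\in\Omega_-$ in hand, the conclusion is immediate from the coset rules: if $x\in\Omega_+$ then $g(x)\in\Omega_-\cdot\Omega_+\cdot\Omega_-=\Omega_+$, while if $x\in\Omega_-$ then $g(x)\in\Omega_-\cdot\Omega_-\cdot\Omega_-=\Omega_-$. (In passing this also reconfirms $g(\mu_{q+1})\subseteq\mu_{q+1}$.) Hence $g(\Omega_+)\subseteq\Omega_+$ and $g(\Omega_-)\subseteq\Omega_-$, as claimed. The only place I expect to spend real effort is the identity in the displayed equation; the subgroup/coset structure and the well-definedness checks are routine once set up.
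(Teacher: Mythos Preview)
Your argument is correct. The core computation---that $w=\tfrac{x^2-2}{x^2+2}$ satisfies $w^{q+1}=-1$ for $x\in\mu_{q+1}$---is exactly the content of the paper's verification that $b=2a\cdot\tfrac{a^4-2}{a^4+2}\in\mu_{q+1}$ (since $b^{q+1}=2^{q+1}a^{q+1}w^{q+1}=4w^{q+1}$ and $4=-1$ in characteristic~$5$). The difference is in packaging: the paper writes $x=\pm a^2$ explicitly, uses $-1=2^2$ to exhibit $g(x)$ as $\pm b^2$, and checks $\bar b=1/b$ by hand in each of the two cases. You instead set up the index-$2$ coset structure $\mu_{q+1}=\Omega_+\sqcup(-\Omega_+)$, locate the three factors $-1,\,x,\,w^2$ of $g(x)$ in their respective cosets, and read off the conclusion uniformly for both $\Omega_\pm$ from the coset multiplication rules. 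Your version is slightly cleaner (one computation covers both cases), and you are more careful than the paper about well-definedness, explicitly ruling out $x^2=\pm2$ and $x^4=-1$ on $\mu_{q+1}$ before dividing.
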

\begin{proof}
$\forall x\in \Omega_{+}$, $\exists a\in \mu_{q+1}$, s.t. $x=a^2$. We need to show $g(x)=b^2$ for some $b\in \mu_{q+1}$. In fact, $g(x)=g(a^2)=-a^2\big( \frac{a^4-2}{a^4+2}\big)^2=\Big(2a\big(\frac{a^4-2}{a^4+2}\big)\Big)^2$. Let $b=2a\big(\frac{a^4-2}{a^4+2}\big)$, noting that $\overline{a}=\frac{1}{a},$ we have
\begin{align*}
\overline{b}=2\overline{a}\big(\frac{\overline{a}^4-2}{\overline{a}^4+2}\big)=\frac{2}{a}\Big( \frac{(\frac{1}{a})^4-2}{(\frac{1}{a})^4+2} \Big)=\frac{1}{2a}\big(\frac{a^4+2}{a^4-2}\big)=\frac{1}{b}.
\end{align*}

Similarly, we have $g(\Omega_{-})\subseteq \Omega_{-}$. In fact, $\forall x\in \Omega_{-}$, $\exists a\in \mu_{q+1}$, s.t. $x=-a^2$. Since $g(x)=g(-a^2)=a^2\big( \frac{a^4-2}{a^4+2}\big)^2=-\Big(2a\big(\frac{a^4-2}{a^4+2}\big)\Big)^2$. Let $b=2a\big(\frac{a^4-2}{a^4+2}\big)$, noting that $b\in \mu_{q+1},$ we have shown $g(x)\in\Omega_{-}.$
\end{proof}

\begin{lemma}\label{lem3.1}
The group of equations
\begin{equation*}
\begin{cases}
xy=1,\\
x+y=\pm1
\end{cases}
\end{equation*}
has no solution in $\Omega_{+},$ $\Omega_{-},$ respectively.
\end{lemma}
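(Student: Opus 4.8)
The plan is to prove the slightly stronger statement that the system
\[
xy=1,\qquad x+y=\pm1
\]
has no solution with $x,y\in\mu_{q+1}$ at all; since $\Omega_{+},\Omega_{-}\subseteq\mu_{q+1}$ by Lemma~\ref{lem1}, this settles both cases at once (and in fact $xy=1$ already forces $x$ and $y$ to lie in the same one of $\Omega_{+},\Omega_{-}$, so the ``respectively'' is automatic). First I would note that any such solution $(x,y)$ consists of the two roots of $t^{2}-st+1\in\mathbb{F}_{q}[t]$, where $s=x+y\in\{1,-1\}$, and that its discriminant is $s^{2}-4=-3$ independently of the sign of $s$.

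The key step is to observe that $-3$ is a square in $\mathbb{F}_{q}$. Indeed, in the prime subfield $\mathbb{F}_{5}$ we have $-3=2$, and since $k$ is even, Lemma~\ref{lem0} gives $\sqrt{2}\in\mathbb{F}_{q}$, whence $\sqrt{-3}\in\mathbb{F}_{q}$. By Lemma~\ref{lem2.1} the quadratic $t^{2}-st+1$ is therefore reducible over $\mathbb{F}_{q}$, so both $x$ and $y$ lie in $\mathbb{F}_{q}$. On the other hand $x,y\in\mu_{q+1}$, and if $z\in\mathbb{F}_{q}\cap\mu_{q+1}$ then $z^{q}=z$ together with $z^{q+1}=1$ forces $z^{2}=1$, i.e. $z\in\{1,-1\}$. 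Hence $x,y\in\{1,-1\}$, so $x+y\in\{2,0,-2\}$; but in characteristic $5$ none of $2,0,-2$ equals $\pm1$, which contradicts $x+y=\pm1$. This completes the argument.

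I do not expect a real obstacle here: the only point that needs care is the computation that $-3$ is a square in $\mathbb{F}_{q}$, and this is exactly where the hypothesis that $k$ is even is used in an essential way (for odd $k$ the element $-3$ is a non-square in $\mathbb{F}_{q}$, so $t^{2}-st+1$ is irreducible and its roots, being conjugate over $\mathbb{F}_{q}$ with product $1$, actually do lie in $\mu_{q+1}$). It is also worth remarking that the partition of $\mu_{q+1}$ into $\Omega_{+}$ and $\Omega_{-}$ plays no role in this proof; the statement is cast in terms of $\Omega_{\pm}$ only because that is the form in which it will be applied in the proof of Theorem~\ref{thm3.1}.
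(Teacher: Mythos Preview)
Your proof is correct and follows essentially the same route as the paper's: both arguments show that any solution $x$ of $t^{2}\mp t+1=0$ lies in $\mathbb{F}_{q}$ (the paper computes the roots explicitly as $\pm2\pm2\sqrt{2}$ using Lemma~\ref{lem0}, whereas you invoke Lemma~\ref{lem2.1} on the discriminant $-3=2$), then use $\mathbb{F}_{q}\cap\mu_{q+1}=\{\pm1\}$ to reach a contradiction. Your remark that the statement holds for all of $\mu_{q+1}$, making the $\Omega_{\pm}$ split irrelevant here, is a nice clarification but not a departure from the paper's argument.
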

\begin{proof}
If not, there exist $x,y\in \Omega_{+}$ (or $\Omega_{-}$) such that the equations hold. It follows that $x^2\pm x+1=0$. So $x=\pm2\pm 2\sqrt{2}\in\mathbb{F}_q$ by Lemma \ref{lem0}.
Thus, we obtain that $x^{q-1}=1$. Since $x^{q+1}=1$. It deduces that $x^{\gcd(q-1,q+1)}=1$, that is $x=\pm1$, which is a contradiction.
\end{proof}

\begin{lemma}\label{lem3}
$g(x)$ permutes $\Omega_{+}$.
\end{lemma}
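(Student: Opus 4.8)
Since $\Omega_+$ is a finite set and, by Lemma \ref{lem2}, $g$ maps $\Omega_+$ into itself, it suffices to prove that $g$ is injective on $\Omega_+$. So I would start from the assumption that $g(x_1)=g(x_2)$ for some $x_1,x_2\in\Omega_+$ and aim to conclude $x_1=x_2$. Writing $x_i=a_i^2$ with $a_i\in\mu_{q+1}$ (so that $\overline{a_i}=a_i^{-1}$), the proof of Lemma \ref{lem2} shows $g(x_i)=\bigl(2a_i\tfrac{a_i^4-2}{a_i^4+2}\bigr)^2=b_i^2$ where $b_i=2a_i\tfrac{a_i^4-2}{a_i^4+2}\in\mu_{q+1}$. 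Then $g(x_1)=g(x_2)$ forces $b_1^2=b_2^2$, i.e. $b_1=\pm b_2$. The plan is to analyze the single equation
\[
2a_1\frac{a_1^4-2}{a_1^4+2}=\pm\,2a_2\frac{a_2^4-2}{a_2^4+2},
\]
clear denominators, and deduce a polynomial identity relating $a_1$ and $a_2$. Cross-multiplying gives $a_1(a_1^4-2)(a_2^4+2)=\pm a_2(a_2^4-2)(a_1^4+2)$; expanding and collecting terms should factor as something like $(a_1\mp a_2)\cdot(\text{stuff})=0$, where the "stuff" vanishing would correspond either to $x_1=x_2$ directly or to one of the degenerate configurations handled by Lemma \ref{lem3.1}.

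The key algebraic step is to rewrite the cross-multiplied relation in terms of the symmetric combinations that Lemma \ref{lem3.1} is designed to rule out. Concretely, I expect the difference $a_1(a_1^4-2)(a_2^4+2)\mp a_2(a_2^4-2)(a_1^4+2)$ to split, after grouping, into a factor proportional to $a_1\mp a_2$ (giving $x_1=x_2$ in the $+$ case, and in the $-$ case $a_1=-a_2$, hence again $x_1=a_1^2=a_2^2=x_2$) times a factor that, set equal to zero, yields an equation of the form $x^2\pm x+1=0$ or equivalently the system $xy=1$, $x+y=\pm1$ with $x,y$ built from $a_1,a_2$ and lying in $\Omega_+$. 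Lemma \ref{lem3.1} then says this second factor cannot vanish, so the only possibility is $x_1=x_2$. I would also need to separately check the finitely many cases where a denominator $a_i^4+2$ could vanish — but $a_i\in\mu_{q+1}$ so $a_i^8=1$, and $a_i^4=-2$ would force $4=(-2)^2=a_i^8=1$ in $\mathbb{F}_q$, i.e. $3=0$, false for $p=5$; so the denominators are never zero and this case does not arise.

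The main obstacle is purely computational: correctly expanding the quartic-by-quartic products and spotting the factorization that isolates $a_1\mp a_2$ and leaves behind exactly the quadratic $x^2\pm x+1$ (equivalently $xy=1$, $x+y=\pm1$) that Lemma \ref{lem3.1} forbids. Once the bookkeeping is arranged so that Lemma \ref{lem3.1} applies to the residual factor, the argument closes immediately. I would carry out the expansion carefully, perhaps substituting $u=a_1^4$, $v=a_2^4$ to reduce clutter, track the two sign cases in parallel, and at the end invoke Lemma \ref{lem3.1} (for $\Omega_+$) together with $|g(\Omega_+)|=|\Omega_+|$ to conclude that $g$ is a bijection on $\Omega_+$.
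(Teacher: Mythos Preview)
Your overall strategy---reduce to injectivity, square-root to $b_1=\pm b_2$, cross-multiply, factor out $a_1\mp a_2$, and then kill the residual factor via Lemma~\ref{lem3.1}---matches the paper's. But there is a real gap at the ``stuff'' step. After factoring out $a\mp b$ (writing $a=a_1$, $b=a_2$), the residual factor is \emph{not} directly an equation of the shape $x^2\pm x+1=0$. In the paper's variables $c=ab$, $d=a\mp b$ it is the quartic
\[
d^4 \pm c\,d^2 - 2(c^4+c^2+1)=0,
\]
which is quadratic in $d^2$ with discriminant $-2(c^2-1)^2$. The paper then (i) uses Lemma~\ref{lem0} to take a square root $\sqrt{-2}\in\mathbb{F}_q$ and write $d^2=\pm 2c\pm 2\sqrt{-2}(c^2-1)$, and (ii) applies the $q$-th power Frobenius together with the conjugation identities $\overline c=1/c$, $\overline d=\mp d/c$ (coming from $a,b\in\mu_{q+1}$) to obtain a second equation; comparing the two forces $c^2=1$. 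Only \emph{after} $c^2=1$ does one get $xy=c^2=1$ and $x+y=d^2\pm 2c=\pm1$, at which point Lemma~\ref{lem3.1} applies. Your plan skips this Frobenius/conjugation step entirely, and without it there is no mechanism to pin down $c$ (equivalently $xy$) as $\pm1$; Lemma~\ref{lem3.1} alone cannot dispose of the full quartic.

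A smaller slip: your denominator check asserts ``$a_i\in\mu_{q+1}$ so $a_i^8=1$,'' which is false ($q+1$ need not divide $8$). The right argument is that $a_i^4=-2$ would give $1=a_i^{4(q+1)}=(-2)^{q+1}=(-2)^2=4$ in $\mathbb{F}_5$, a contradiction; so $a_i^4+2\neq0$ as you wanted.
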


\begin{proof}
If the assertion would not hold, there exist $x\neq y\in\Omega_{+}$, such that $g(x)=g(y)$. Let $x=a^2$, $y=b^2$, where $a,b\in \mu_{q+1}$, and $a\neq \pm b$. Since $g(x)=g(y)$, we have $-a^2\big(\frac{a^4-2}{a^4+2}\big)^2=-b^2\big(\frac{b^4-2}{b^4+2}\big)^2$, which means that $\frac{a^5-2a}{a^4+2}=\pm\frac{b^5-2b}{b^4+2}$.

Below, we consider the two cases where $\frac{a^5-2a}{a^4+2}=\frac{b^5-2b}{b^4+2}$ and $\frac{a^5-2a}{a^4+2}=-\frac{b^5-2b}{b^4+2}$.

{\bf Case 1: $\frac{a^5-2a}{a^4+2}=\frac{b^5-2b}{b^4+2}.$}

We obtain
\begin{equation*}
(a-b)\Big(a^4b^4+2(a-b)^4+2ab(a-b)^2-4a^2b^2-4\Big)=0,
\end{equation*}
which implies that
$$a^4b^4+2(a-b)^4+2ab(a-b)^2-4a^2b^2-4=0,$$
since $a\neq b$.
Let $c=ab$, $d=a-b$, we have
$$c^4+2d^4+2cd^2-4c^2-4=0,$$
that is
$$d^4+cd^2-2(c^4+c^2+1)=0.$$
Thus, we have $d^2=2c\pm 2\sqrt{-2}(c^2-1)$, since $\Delta=-2(c^2-1)^2$ is a square in $\mathbb{F}_{q^2}.$

Now, we consider the following two subcases.

{\bf Subcase 1.1 } For the case
\begin{equation}\label{eq3.1}
d^2=2c+2\sqrt{-2}(c^2-1),
\end{equation}
raising $q$-th power of both sides of Eq. \eqref{eq3.1}, by Lemma \ref{lem0}, we get
\begin{equation*}
\overline{d}^2=2\overline{c}+2\sqrt{-2}(\overline{c}^2-1).
\end{equation*}
Since $\overline{c}=\frac{1}{c}$ and $\overline{d}=\overline{a}-\overline{b}=\frac{1}{a}-\frac{1}{b}=\frac{b-a}{ab}=\frac{-d}{c}$, we obtain
$$\frac{d^2}{c^2}=\frac{2}{c}+2\sqrt{-2}(\frac{1}{c^2}-1),$$
which implies that
\begin{equation}\label{eq3.2}
d^2=2c+2\sqrt{-2}(1-c^2).
\end{equation}
Combining Eq. \eqref{eq3.1} and Eq. \eqref{eq3.2}, we have $c^2=1$.
It follows that
\begin{equation*}
d^2=2c=
\begin{cases}
2,~ \text{if $c=1$,}\\
-2,~ \text{if $c=-1$.}
\end{cases}
\end{equation*}

\begin{enumerate}
\item When $c=1$, $d^2=2$, we have
 \begin{equation}\label{eq0.1}
   \begin{cases}
    xy=a^2b^2=c^2=1,\\
    x+y=a^2+b^2=(a-b)^2+2ab=d^2+2c=-1.
   \end{cases}
 \end{equation}
 By Lemma \ref{lem3.1}, Eq. \eqref{eq0.1} has no solution in $\Omega_{+}$, which is a contradiction.
\item When $c=-1$, $d^2=-2$, we get
 \begin{equation}\label{eq0.2}
  \begin{cases}
   xy=1,\\
   x+y=1.
  \end{cases}
 \end{equation}
 By Lemma \ref{lem3.1}, Eq. \eqref{eq0.2} has no solution in $\Omega_{+}$, which is a contradiction.
\end{enumerate}

{\bf Subcase 1.2} For the case $d^2=2c-2\sqrt{-2}(c^2-1),$ we could also get a contradiction in a similar way. In fact, raising $q$-th power of both sides of $d^2=2c-2\sqrt{-2}(c^2-1)$, by Lemma \ref{lem0}, we get
\begin{equation*}
\overline{d}^2=2\overline{c}-2\sqrt{-2}(\overline{c}^2-1).
\end{equation*}
Noting that $\overline{c}=\frac{1}{c}$ and $\overline{d}=\frac{-d}{c}$, we obtain
$$\frac{d^2}{c^2}=\frac{2}{c}-2\sqrt{-2}(\frac{1}{c^2}-1),$$
which implies that
\begin{equation*}
d^2=2c-2\sqrt{-2}(1-c^2).
\end{equation*}
It follows that $c^2=1.$ The rest of the argument is exactly the same as {\bf Subcase 1.1}.

{\bf Case 2: $\frac{a^5-2a}{a^4+2}=-\frac{b^5-2b}{b^4+2}$}. The discussion is similar with {\bf Case 1}.
In fact, we have
\begin{equation*}
(a+b)\Big(a^4b^4+2(a+b)^4-2ab(a+b)^2-4a^2b^2-4\Big)=0,
\end{equation*}
which implies that
$$a^4b^4+2(a+b)^4-2ab(a+b)^2-4a^2b^2-4=0,$$
since $a\neq -b$.
Let $c=ab$, $d=a+b$, we have
$$c^4+2d^4-2cd^2-4c^2-4=0,$$
that is
$$d^4-cd^2-2(c^4+c^2+1)=0.$$
Therefore, we have $d^2=-2c\pm 2\sqrt{-2}(c^2-1)$, since $\Delta=-2(c^2-1)^2$ is a square in $\mathbb{F}_{q^2}.$

Now, we consider the following two subcases.

{\bf Subcase 2.1 } For the case
\begin{equation}\label{eq3.2.1}
d^2=-2c+2\sqrt{-2}(c^2-1),
\end{equation}
raising $q$-th power of both sides of Eq. \eqref{eq3.2.1}, by Lemma \ref{lem0}, we get
\begin{equation*}
\overline{d}^2=-2\overline{c}+2\sqrt{-2}(\overline{c}^2-1).
\end{equation*}
Since $\overline{c}=\frac{1}{c}$ and $\overline{d}=\overline{a}+\overline{b}=\frac{1}{a}+\frac{1}{b}=\frac{a+b}{ab}=\frac{d}{c}$, we obtain
$$\frac{d^2}{c^2}=\frac{-2}{c}+2\sqrt{-2}(\frac{1}{c^2}-1),$$
which implies that
\begin{equation}\label{eq3.2.2}
d^2=-2c+2\sqrt{-2}(1-c^2).
\end{equation}
Combining Eq. \eqref{eq3.2.1} and Eq. \eqref{eq3.2.2}, we have $c^2=1$.
It follows that
\begin{equation*}
d^2=-2c=
\begin{cases}
-2,~ \text{if $c=1$,}\\
2,~ \text{if $c=-1$.}
\end{cases}
\end{equation*}

\begin{enumerate}
\item If $c=1$, $d^2=-2$, we have
\begin{equation}\label{eq0.3}
\begin{cases}
xy=1,\\
x+y=1.
\end{cases}
\end{equation}
By Lemma \ref{lem3.1}, Eq. \eqref{eq0.3} has no solution in $\Omega_{+}$, which is a contradiction.
\item If $c=-1$, $d^2=2$, we get
\begin{equation}\label{eq0.4}
\begin{cases}
xy=1,\\
x+y=-1.
\end{cases}
\end{equation}
By Lemma \ref{lem3.1}, Eq. \eqref{eq0.4} has no solution in $\Omega_{+}$, which is a contradiction.
\end{enumerate}

{\bf Subcase 2.2} For the case $d^2=-2c-2\sqrt{-2}(c^2-1),$ we could also get a contradiction in a similar way. In fact, raising $q$-th power of both sides of $d^2=-2c-2\sqrt{-2}(c^2-1)$, by Lemma \ref{lem0}, we get
\begin{equation*}
\overline{d}^2=2\overline{c}-2\sqrt{-2}(\overline{c}^2-1).
\end{equation*}
Noting that $\overline{c}=\frac{1}{c}$ and $\overline{d}=\frac{d}{c}$, we obtain
$$\frac{d^2}{c^2}=\frac{2}{c}-2\sqrt{-2}(\frac{1}{c^2}-1),$$
which implies that
\begin{equation*}
d^2=-2c-2\sqrt{-2}(1-c^2).
\end{equation*}
It follows that $c^2=1.$ The rest of the argument is exactly the same as {\bf Subcase 2.1}.
\end{proof}

\begin{lemma}\label{lem4}
$g(x)$ permutes $\Omega_{-}$.
\end{lemma}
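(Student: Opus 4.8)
The plan is to mirror, almost verbatim, the proof of Lemma~\ref{lem3}. By Lemma~\ref{lem2} we already know $g(\Omega_{-})\subseteq\Omega_{-}$, and since $\Omega_{-}$ is finite it suffices to prove that $g$ is injective on $\Omega_{-}$. So I would suppose, for contradiction, that there are distinct $x,y\in\Omega_{-}$ with $g(x)=g(y)$, and write $x=-a^2$, $y=-b^2$ with $a,b\in\mu_{q+1}$ and $a\neq\pm b$. Using the identity established in the proof of Lemma~\ref{lem2}, $g(-a^2)=a^2\big(\frac{a^4-2}{a^4+2}\big)^2=-\big(2a\,\frac{a^4-2}{a^4+2}\big)^2$, and similarly for $b$, so the equality $g(x)=g(y)$ is equivalent to $\frac{a^5-2a}{a^4+2}=\pm\frac{b^5-2b}{b^4+2}$ --- precisely the dichotomy already treated in Lemma~\ref{lem3}.

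From here I would run the identical case analysis. In Case~1 (the $+$ sign), setting $c=ab$ and $d=a-b$ leads to $d^4+cd^2-2(c^4+c^2+1)=0$, hence $d^2=2c\pm2\sqrt{-2}(c^2-1)$ since the discriminant $-2(c^2-1)^2$ is a square by Lemma~\ref{lem0}; applying the Frobenius and using $\overline{c}=1/c$, $\overline{d}=-d/c$ forces $c^2=1$ and $d^2=2c=\pm2$. In Case~2 (the $-$ sign), with $c=ab$, $d=a+b$ one gets $d^4-cd^2-2(c^4+c^2+1)=0$, so $d^2=-2c\pm2\sqrt{-2}(c^2-1)$, and the same conjugation argument (now $\overline{d}=d/c$) again yields $c^2=1$ and $d^2=-2c=\mp2$. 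The one bookkeeping difference from Lemma~\ref{lem3} is that here $x+y=-(a^2+b^2)=-(d^2\pm2c)$ rather than $d^2\pm2c$; however, computing over $\mathbb{F}_5$ (where $-4=1$), each of the four resulting pairs $(c,d^2)$ still produces the system $xy=c^2=1$, $x+y=\pm1$. By Lemma~\ref{lem3.1} --- whose statement explicitly covers $\Omega_{-}$ as well as $\Omega_{+}$ --- no such $x,y$ exist in $\Omega_{-}$, which contradicts $x,y\in\Omega_{-}$. Hence $g$ is injective on $\Omega_{-}$, and therefore permutes it.

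I do not expect a genuine obstacle here, since the argument is structurally a transcription of Lemma~\ref{lem3} with $\Omega_{+}$ replaced by $\Omega_{-}$. The only points that need care are (i) keeping the signs straight when re-expressing $x+y$ through $c$ and $d$, a short computation in $\mathbb{F}_5$, and (ii) checking that the denominators $a^4+2$ and $b^4+2$ do not vanish on $\mu_{q+1}$: if $a^4=-2$ then, since $-2\in\mathbb{F}_q$ by Lemma~\ref{lem0} and $a^q=1/a$, raising to the $q$-th power would give $-2=-1/2$, impossible in characteristic $5$. Everything else is identical to the proof of the preceding lemma.
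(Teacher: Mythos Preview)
Your proposal is correct and follows essentially the same approach as the paper: the paper's own proof simply says the argument is identical to that of Lemma~\ref{lem3} after writing $x=-a^2$, $y=-b^2$, noting that the only change is the sign of $x+y=-(a^2+b^2)$, which still leaves $xy=1$ and $x+y=\pm1$ so that Lemma~\ref{lem3.1} applies. Your added remark on the non-vanishing of $a^4+2$ on $\mu_{q+1}$ is a nice piece of care that the paper leaves implicit.
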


\begin{proof}
The proof is similar to that of Lemma \ref{lem3}. In fact, if the assertion would not hold, there exist $x\neq y\in\Omega_{-}$, such that $g(x)=g(y)$. Let $x=-a^2$, $y=-b^2$, where $a,b\in \mu_{q+1}$, and $a\neq \pm b$. Since $g(x)=g(y)$, we have $a^2\big(\frac{a^4-2}{a^4+2}\big)^2=b^2\big(\frac{b^4-2}{b^4+2}\big)^2$, which means that $\frac{a^5-2a}{a^4+2}=\pm\frac{b^5-2b}{b^4+2}$. The rest of the argument is exactly the same as Lemma \ref{lem3}, except for changing the sign of $x+y$ since $x+y=-(a^2+b^2)$, which doesn't affect the rest discussion.
\end{proof}

\begin{proof}[Proof of Theorem \ref{thm3.1}]
It can be derived directly from Lemmas \ref{lem1}, \ref{lem2}, \ref{lem3} and \ref{lem4}.
\end{proof}

\section{Proof of Conjecture 1}\label{conj1}

\begin{theorem}\cite[Conjecture 1]{WuL}\label{thm4.1}
The polynomial $f(x)=x\big(\frac{x^2-x+2}{x^2+x+2}\big)^2$ is a PP over $\mathbb{F}_{5^k}$ for odd $k.$
\end{theorem}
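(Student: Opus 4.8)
The plan is to follow the square/non-square dichotomy used in Section~\ref{conj2}, but to replace the relation $a^q=1/a$ (which held on $\mu_{q+1}$) by $a^q=\pm a$. Throughout put $q=5^k$ with $k$ odd. First I would record that the discriminant of each of $x^2-x+2$ and $x^2+x+2$ is $1-8=-7\equiv 3\pmod 5$, and that, since $(5^k-1)/2$ is twice an odd number, every non-square of $\mathbb{F}_5$ remains a non-square in $\mathbb{F}_q$; hence $3$ is a non-square in $\mathbb{F}_q$ and, by Lemma~\ref{lem2.1}, both quadratics are irreducible over $\mathbb{F}_q$. Therefore $f$ is defined on all of $\mathbb{F}_q$, vanishes only at $0$, and differs from $x$ by the nonzero square $\bigl(\tfrac{x^2-x+2}{x^2+x+2}\bigr)^2$; consequently $f$ maps the set $S$ of nonzero squares into $S$ and the set $N$ of non-squares into $N$. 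Since $|S|=|N|=(q-1)/2$ and $f(0)=0$, it suffices to prove that $f$ is injective on $S$ and on $N$.

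For each case write $x=a^2$, with $a\in\mathbb{F}_q^*$ when $x\in S$ and $a\in\mathbb{F}_{q^2}$ with $a^q=-a$ when $x\in N$ (the latter because a non-square of $\mathbb{F}_q$ is a square in $\mathbb{F}_{q^2}$). Then $f(a^2)=h(a)^2$, where $h(a)=\dfrac{a^5-a^3+2a}{a^4+a^2+2}$ is odd and vanishes only at $0$. A collision $f(a^2)=f(b^2)$ with $a^2\neq b^2$ gives $h(a)=\pm h(b)$; oddness of $h$ turns the minus case into the plus case for the pair $(a,-b)$, so I may assume $h(a)=h(b)$ with $a\neq b$. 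Clearing denominators, the resulting polynomial identity has $a-b$ as a factor, and expressing the remaining factor through $c=ab$ and $d=a+b$ should yield
\[
2d^4+(c^2+2c-2)\,d^2+(c^4+3c^2+4)=0 .
\]
The key point is that the discriminant of this quadratic in $d^2$ works out to $-2(c^2+4c+3)^2=-2(c+1)^2(c+3)^2$, so $d^2=\tfrac14\bigl(-(c^2+2c-2)\pm(c^2+4c+3)\sqrt{-2}\bigr)$.

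To finish, note that in both situations $c\in\mathbb{F}_q$ and $d^2\in\mathbb{F}_q$: this is immediate when $a,b\in\mathbb{F}_q$, and follows from $c^q=a^qb^q=c$ and $(d^2)^q=((a+b)^q)^2=(-(a+b))^2=d^2$ when $a^q=-a$, $b^q=-b$. Since $k$ is odd, $-2$ is a non-square in $\mathbb{F}_q$, so $\sqrt{-2}\notin\mathbb{F}_q$, which forces $c^2+4c+3=0$, i.e.\ $c\in\{-1,-3\}$. Substituting back gives $d^2=2$ (if $c=-1$) or $d^2=1$ (if $c=-3$); combined with $xy=c^2$ and $x+y=d^2-2c$, this makes $x,y$ the two roots of $z^2+z+1$ or of $z^2-2z-1$, each irreducible over $\mathbb{F}_q$ (their discriminants $-3\equiv 2$ and $8\equiv 3$ are non-squares), contradicting $x,y\in\mathbb{F}_q$. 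Hence $f$ is injective on $S$ and on $N$, and with $f(0)=0$ it permutes $\mathbb{F}_q$.

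I expect the main obstacle to be the explicit reduction to the displayed quadratic together with the observation that its discriminant is $(-2)$ times a perfect square — this is precisely where the hypothesis ``$k$ odd'' enters, making $-2$ a non-square and thereby collapsing the parameter $c$. A secondary, routine-but-necessary check is that the $S$-case and the $N$-case genuinely run in parallel, the only difference being the conjugation rule $a^q=a$ versus $a^q=-a$, and that $h$ being odd with no nonzero zeros renders the passage from ``$h(a)=\pm h(b)$'' to ``$h(a)=h(b)$ with $a\neq b$'' harmless.
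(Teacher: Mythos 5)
Your proposal is correct, and the computations you left as ``should yield'' do check out: clearing denominators in $h(a)=h(b)$ and removing the factor $a-b$ gives, with $c=ab$, $d=a+b$, exactly $2d^4+(c^2+2c-2)d^2+(c^4+3c^2+4)=0$ over $\mathbb{F}_5$, its discriminant in $d^2$ is $-2(c+1)^2(c+3)^2$, and the degenerate values $c=-1,\,c=2$ lead to $(xy,x+y)=(1,-1)$ and $(-1,2)$, i.e.\ to the irreducible quadratics $u^2+u+1$ and $u^2-2u-1$, a contradiction. Your route differs from the paper's in two ways that genuinely shorten the argument. The paper also splits $\mathbb{F}_{5^k}^*$ into squares and non-squares (its $\Omega_1$ and $\Omega_2$ in Lemmas \ref{lemconj1.1} and \ref{lemconj1.2}), but it parametrizes non-squares as $x=2a^2$ with $a\in\mathbb{F}_{5^k}^*$, which produces a second, differently-coefficiented quartic and a second discriminant computation; you instead write a non-square as $a^2$ with $a\in\mathbb{F}_{q^2}$, $a^q=-a$ (the conjugation trick the paper uses only in Section \ref{conj2} for $\mu_{q+1}$), so both halves feed into one identity, with only the verification $c,d^2\in\mathbb{F}_q$ changing. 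Second, the paper treats the sign cases $h(a)=h(b)$ and $h(a)=-h(b)$ separately (substituting $d=a-b$ and $d=a+b$ respectively), while you use oddness of $h$ to absorb the minus case via $b\mapsto-b$; together these reductions replace the paper's four case-analyses (each with subcases) by a single quadratic in $d^2$ with two exceptional values of $c$. The underlying mechanism is the same as the paper's — the discriminant is a non-square ($-2\equiv 3$, non-square since $k$ is odd) times a perfect square, forcing that square to vanish, and the surviving parameter values are killed by Lemma \ref{lem2.1} applied to $u^2+u+1$ and $u^2-2u-1$ — so what your version buys is economy of casework at the modest cost of a brief excursion into $\mathbb{F}_{q^2}$ and the conjugation checks $c^q=c$, $(d^2)^q=d^2$.
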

Before proving this conjecture, we need to show the following lemmas. Let $\Omega_{1}=\{x^2:x\in \mathbb{F}_{5^k}^{*}\}$ and $\Omega_{2}=\{2x^2:x\in\mathbb{F}_{5^k}^{*}\}$.

\begin{lemma}\label{lemconj1.1}
$f(x)$ is a PP on $\Omega_{1}.$
\end{lemma}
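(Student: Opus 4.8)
The plan is to mirror the strategy already used for Theorem~\ref{thm3.1}: show that the set $\Omega_1$ of nonzero squares in $\mathbb{F}_{5^k}$ is mapped into itself by $f$ (this will be checked separately in a companion lemma, just as $g(\Omega_+)\subseteq\Omega_+$ was isolated as Lemma~\ref{lem2}), and then prove injectivity of $f$ restricted to $\Omega_1$. Since $|\Omega_1|=\frac{5^k-1}{2}$ is finite, injectivity on $\Omega_1$ together with $f(\Omega_1)\subseteq\Omega_1$ forces $f$ to permute $\Omega_1$. So the substance of this lemma is the injectivity argument.

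First I would suppose $x,y\in\Omega_1$ with $x\neq y$ and $f(x)=f(y)$, writing $x=a^2$, $y=b^2$ with $a,b\in\mathbb{F}_{5^k}^*$ and $a\neq\pm b$. Expanding $f(x)=f(y)$ and clearing denominators yields a polynomial identity in $a,b$; since $a\neq y$ one divides out the obvious factor and is left with a symmetric relation in $a$ and $b$. At this point I would introduce the elementary symmetric quantities $c=ab$ and $s=a+b$ (or $d=a-b$, depending on which sign of the defining equation $\frac{a^3-a^2+2a}{a^2+a+2}=\pm\frac{b^3-b^2+2b}{b^2+b+2}$ one is in), reducing the relation to a single equation relating $c$ and $s$. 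Solving this as a quadratic — and invoking Lemma~\ref{lem2.1} to control whether the relevant discriminant is a square in $\mathbb{F}_{5^k}$ — will pin down $c$ and $s$ to a short list of possibilities, each of which corresponds to the pair $(x,y)$ satisfying a small system such as $xy=c^2$, $x+y=(\text{something})$. One then checks that every such system has no solution with $x\neq y$ in $\Omega_1$, exactly the way Lemma~\ref{lem3.1} disposed of the analogous systems in the $\mu_{q+1}$ setting; here the key number-theoretic input is that $k$ is odd, so that (for instance) $-1$ is a non-square and $\pm2$ behave in a prescribed way in $\mathbb{F}_{5^k}$, which kills the spurious solutions.

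The main obstacle I expect is bookkeeping rather than conceptual: unlike the $\mu_{q+1}$ case, there is no Frobenius conjugation trick $\overline{a}=1/a$ to cut the casework in half, so both the $+$ and $-$ branches must be handled on their own, and within each branch the discriminant condition from Lemma~\ref{lem2.1} splits into square versus non-square subcases. Keeping the quadratic manipulations in $c$ and $s$ clean — and correctly using the parity of $k$ to decide squareness of the constants that appear — is where the argument could go wrong. A secondary point to watch is the degenerate denominators: one should confirm $x^2-x+2$ and $x^2+x+2$ never vanish on $\mathbb{F}_{5^k}$ for odd $k$ (again via Lemma~\ref{lem2.1}, by checking their discriminants $-7=-2$ and likewise are non-squares), so that $f$ is genuinely defined on all of $\Omega_1$.
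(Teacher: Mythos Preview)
Your plan is essentially the paper's own proof: write $x=a^2$, $y=b^2$, split on the sign in $a\cdot\frac{a^4-a^2+2}{a^4+a^2+2}=\pm\, b\cdot\frac{b^4-b^2+2}{b^4+b^2+2}$, pass to $c=ab$ and $d=a\mp b$, obtain a quadratic in $z=d^2$ whose discriminant is $2$ (resp.\ $3$) times a perfect square, invoke Lemma~\ref{lem2.1} and the non-squareness of $2,3$ for odd $k$, and dispose of the degenerate values of $c$ by checking the resulting systems for $(x+y,xy)$ are inconsistent. One small slip to fix: after substituting $x=a^2$ the rational function has degree $5$ over degree $4$, not the degree-$3$ over degree-$2$ expression you wrote; and note that the paper folds the verification $f(\Omega_1)\subseteq\Omega_1$ into the proof of Theorem~\ref{thm4.1} rather than isolating it as a separate lemma.
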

\begin{proof}
If not, there exist two distinct elements $x,y\in \Omega_1$ such that $f(x)=f(y).$ Let $x=a^2$, $y=b^2,$ where $a,b\in \mathbb{F}_{5^k}^*$, and $a\neq \pm b.$ We have $f(a^2)=f(b^2)$. That is
$$a^2\big( \frac{a^4-a^2+2}{a^4+a^2+2} \big)^2=b^2\big( \frac{b^4-b^2+2}{b^4+b^2+2} \big)^2.$$
We get
$$\frac{a^5-a^3+2a}{a^4+a^2+2}=\pm \frac{b^5-b^3+2b}{b^4+b^2+2}.$$

{\bf Case 1:}
For the case
$$\frac{a^5-a^3+2a}{a^4+a^2+2}=\frac{b^5-b^3+2b}{b^4+b^2+2}.$$

After simple computations, we have
$$(a-b)\big( a^4b^4+2(a-b)^4+(a^2b^2-2ab-2)(a^2+ab+b^2)+a^3b^3-a^2b^2-2ab+4 \big)=0.$$
Let $c=ab$, $d=a-b$. Noting that $a\neq b$, we have
$$c^4+2d^4+(c^2-2c-2)(d^2-2c)+c^3-c^2-2c+4=0.$$
Simplifying the above equation gives
$$d^4-(2c^2+c+1)d^2-2c^4+2c^3-c^2+c+2=0.$$
Let $z=d^2\in\mathbb{F}_{5^k}^*$, we have
\begin{equation}\label{eq4.1}
z^2-(2c^2+c+1)z-2c^4+2c^3-c^2+c+2=0.
\end{equation}

By Lemma \ref{lem2.1}, we know $z^2-(2c^2+c+1)z-2c^4+2c^3-c^2+c+2$ is irreducible over $\mathbb{F}_{5^k}$ if and only if the discriminant $\Delta$ is non-square. It is easy to obtain $\Delta=2(c^2-c-2)^2.$

{\bf Subcase 1.1:}
If $c^2-c-2\neq 0,$ clearly, $\Delta$ is a non-square element in $\mathbb{F}_{5^k}$ since $2$ is non-square. Thus Eq. \eqref{eq4.1} has no solution in $\mathbb{F}_{5^k}$.

{\bf Subcase 1.2:}
If $c^2-c-2=0,$ we have $c=2$ or $c=-1$.

\begin{enumerate}
  \item If $c=2,$ we get $d^2=-2$. Then we have
  \begin{equation}\label{eqn1}
  \begin{cases}
  xy=a^2b^2=c^2=-1, \\
  x+y=a^2+b^2=(a-b)^2+2ab=d^2+2c=2.
  \end{cases}
  \end{equation}
  So $x,y$ are two roots of $u^2-2u-1=0.$ However, $u^2-2u-1$ is irreducible over $\mathbb{F}_{5^k}$ by Lemma \ref{lem0}, since the discriminant $\Delta=3$ is non-square. We get a contradiction.
  \item If $c=-1$, we have $d^2=1$. Then \begin{equation}\label{eqn2}
  \begin{cases}
  xy=1, \\
  x+y=-1.
  \end{cases}
  \end{equation}
  So $x,y$ are two roots of $u^2+u+1=0.$ However, $u^2+u+1$ is irreducible over $\mathbb{F}_{5^k}$ by Lemma \ref{lem0}, since the discriminant $\Delta=2$ is non-square. We get a contradiction.
\end{enumerate}

{\bf Case 2:}
For the case
$$\frac{a^5-a^3+2a}{a^4+a^2+2}=-\frac{b^5-b^3+2b}{b^4+b^2+2}.$$

After simple computations, we have
$$(a+b)\big( a^4b^4+2(a+b)^4+(a^2b^2+2ab-2)(a^2-ab+b^2)-a^3b^3-a^2b^2+2ab+4 \big)=0.$$
Let $c=ab$, $d=a+b$. Noting that $a\neq -b$, we have
$$c^4+2d^4+(c^2+2c-2)(d^2+2c)-c^3-c^2+2c+4=0.$$
Simplifying the above equation gives
$$d^4-(2c^2-c+1)d^2-2c^4-2c^3-c^2-c+2=0.$$
Let $z=d^2\in\mathbb{F}_{5^k}^*$, we have
\begin{equation}\label{eq4.2}
z^2-(2c^2-c+1)z-2c^4-2c^3-c^2-c+2=0.
\end{equation}
By Lemma \ref{lem2.1}, we know $z^2-(2c^2-c+1)z-2c^4-2c^3-c^2-c+2$ is irreducible over $\mathbb{F}_{5^k}$ if and only if the discriminant $\Delta$ is non-square. It is easy to obtain $\Delta=2(c^2+c-2)^2.$

{\bf Subcase 2.1:}
If $c^2+c-2\neq 0,$ clearly, $\Delta$ is a non-square element in $\mathbb{F}_{5^k}$ since $2$ is non-square. Thus Eq. \eqref{eq4.2} has no solution in $\mathbb{F}_{5^k}$.

{\bf Subcase 2.2:}
If $c^2+c-2=0,$ we have $c=-2$ or $c=1$.

\begin{enumerate}
  \item If $c=-2,$ we get $d^2=-2$. Then we have
  \begin{equation}\label{eqn1}
  \begin{cases}
  xy=-1, \\
  x+y=-1.
  \end{cases}
  \end{equation}
  So we get $x=y=2,$ which is a contradiction.
  \item If $c=1$, we have $d^2=1$. Then \begin{equation}\label{eqn2}
  \begin{cases}
  xy=1, \\
  x+y=-1.
  \end{cases}
  \end{equation}
  So $x,y$ are two roots of $u^2+u+1=0.$ However, $u^2+u+1$ is irreducible over $\mathbb{F}_{5^k}$ by Lemma \ref{lem0}, since the discriminant $\Delta=2$ is non-square. We get a contradiction.
\end{enumerate}

\end{proof}

\begin{lemma}\label{lemconj1.2}
$f(x)$ permutates $\Omega_{2}.$
\end{lemma}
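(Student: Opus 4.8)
The plan is to mirror the proof of Lemma~\ref{lemconj1.1} almost verbatim, working this time on the coset $\Omega_{2}=\{2x^{2}:x\in\mathbb{F}_{5^k}^{*}\}$ of squares. Suppose for contradiction that there are distinct $x,y\in\Omega_{2}$ with $f(x)=f(y)$, and write $x=2a^{2}$, $y=2b^{2}$ with $a,b\in\mathbb{F}_{5^k}^{*}$, $a\neq\pm b$. Plugging in and clearing denominators in $f(2a^{2})=f(2b^{2})$ gives an identity of the form $\frac{\text{cubic in }2a^{2}}{\text{quadratic in }2a^{2}}=\pm\frac{\text{cubic in }2b^{2}}{\text{quadratic in }2b^{2}}$, so again two cases split off according to the sign.

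In Case~1 (the $+$ sign) the numerator will factor as $(a-b)$ times a symmetric polynomial, which after the substitution $c=ab$, $d=a-b$ (so $d^{2}=a^{2}+b^{2}-2ab$, and $a^{2}+b^{2}=d^{2}+2c$) collapses to a quadratic $z^{2}-P(c)z+Q(c)=0$ in $z=d^{2}\in\mathbb{F}_{5^k}^{*}$. As in Lemma~\ref{lemconj1.1} I expect the discriminant to come out as $2$ times a perfect square, say $\Delta=2\,R(c)^{2}$; since $2$ is a non-square in $\mathbb{F}_{5^k}$ for odd $k$, Lemma~\ref{lem2.1} kills the equation whenever $R(c)\neq0$. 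For the finitely many $c$ with $R(c)=0$ one solves explicitly for $d^{2}$, recovers $xy=4a^{2}b^{2}=4c^{2}$ and $x+y=2(a^{2}+b^{2})=2(d^{2}+2c)$, and checks that the resulting monic quadratic $u^{2}-(x+y)u+xy$ over $\mathbb{F}_{5^k}$ is irreducible (discriminant a non-square, again via Lemma~\ref{lem0}/Lemma~\ref{lem2.1}) or else forces $x=y$, contradicting $a\neq\pm b$ in every branch. Case~2 (the $-$ sign) is handled the same way with $c=ab$, $d=a+b$, $a^{2}+b^{2}=d^{2}-2c$.

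The one genuinely new point, which I would deal with first, is that the whole argument of Lemma~\ref{lemconj1.1} on $\Omega_{1}$ together with this lemma on $\Omega_{2}$ should assemble into a proof of Theorem~\ref{thm4.1}: one needs $\Omega_{1}$ and $\Omega_{2}$ to partition $\mathbb{F}_{5^k}^{*}$ (true since $2$ is a non-square, so $\Omega_{2}$ is precisely the set of non-squares and $\Omega_{1}\cap\Omega_{2}=\varnothing$, $\Omega_{1}\cup\Omega_{2}\cup\{0\}=\mathbb{F}_{5^k}$), one needs $f(0)=0$, and one needs the analogue of Lemma~\ref{lem2} here, namely that $f$ maps $\Omega_{1}$ into $\Omega_{1}$ and $\Omega_{2}$ into $\Omega_{2}$ — this is why $f(x)=x\bigl(\tfrac{x^{2}-x+2}{x^{2}+x+2}\bigr)^{2}$ is written as $x$ times a square, so $f$ preserves the square class of its argument. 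Granting that stabilization, injectivity on each of the two halves plus the value $0$ yields a bijection of $\mathbb{F}_{5^k}$.

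The main obstacle is purely computational: verifying that the quartic-in-$d$ (equivalently quadratic-in-$z$) that emerges after the substitution really does have discriminant equal to $2$ times a square, and correctly enumerating the exceptional $c$'s and the associated quadratics in $u$. The structure strongly suggests this will work — it is exactly what happened on $\Omega_{1}$, with the extra factors of $2$ from $x=2a^{2}$ rescaling $P(c)$, $Q(c)$, $R(c)$ — but the bookkeeping must be done carefully, and one should double-check that no exceptional branch accidentally produces a legitimate solution with $a\neq\pm b$ rather than a contradiction.
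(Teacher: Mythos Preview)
Your plan is exactly the paper's approach: set $x=2a^{2}$, $y=2b^{2}$, split on the sign, factor out $(a\mp b)$, substitute $c=ab$, $d=a\mp b$, reduce to a quadratic in $z=d^{2}$, and kill it via Lemma~\ref{lem2.1} except at the finitely many roots of the square part of the discriminant, which are then dispatched by checking the quadratic in $u$ for $x,y$. The only correction is that the discriminant in the $\Omega_{2}$ computation comes out as $3\,R(c)^{2}$ (specifically $3(c^{2}+2c+2)^{2}$ in Case~1 and $3(c^{2}-2c+2)^{2}$ in Case~2), not $2\,R(c)^{2}$ as you guessed; since $3$ is also a non-square in $\mathbb{F}_{5^{k}}$ for odd $k$, your argument goes through verbatim.
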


\begin{proof}
An argument similar to that of the above Lemma \ref{lemconj1.1} shows that$f(x)$ permutates $\Omega_{2}.$

In fact, if not, there exist two distinct elements $x$ and $y$ such that $f(x)=f(y).$ Let $x=2a^2$, $y=2b^2,$ where $a,b\in \mathbb{F}_{5^k}^*$, and $a\neq \pm b.$ We have $f(2a^2)=f(2b^2)$. That is
$$2a^2\big( \frac{4a^4-2a^2+2}{4a^4+2a^2+2} \big)^2=2b^2\big( \frac{4b^4-2b^2+2}{4b^4+2b^2+2} \big)^2.$$
We get
$$\frac{4a^5-2a^3+2a}{4a^4+2a^2+2}=\pm \frac{4b^5-2b^3+2b}{4b^4+2b^2+2}.$$

Next, we consider the following two cases separately.

{\bf Case 1:}
For the case
$$\frac{4a^5-2a^3+2a}{4a^4+2a^2+2}=\frac{4b^5-2b^3+2b}{4b^4+2b^2+2}.$$

After simple computations, we have
$$(a-b)\big( a^4b^4+3(a-b)^4+(3a^2b^2-3ab-4)(a^2+ab+b^2)+3a^3b^3-4a^2b^2-4ab+4 \big)=0.$$
Let $c=ab$, $d=a-b$. Noting that $a\neq b$, we have
$$c^4+3d^4+(3c^2-3c-4)(d^2-2c)+3c^3-4c^2-4c+4=0.$$
Simplifying the above equation gives
$$d^4+(c^2-c+2)d^2+2c^4-c^3-c^2-2c+3=0.$$
Let $z=d^2\in\mathbb{F}_{5^k}^*$, we have
\begin{equation}\label{eq4.2.1}
z^2+(c^2-c+2)z+2c^4-c^3-c^2-2c-2=0.
\end{equation}

By Lemma \ref{lem2.1}, we know $z^2+(c^2-c+2)z+2c^4-c^3-c^2-2c-2$ is irreducible over $\mathbb{F}_{5^k}$ if and only if the discriminant $\Delta$ is non-square. It is easy to obtain $\Delta=3(c^2+2c+2)^2.$

{\bf Subcase 1.1:}
If $c^2+2c+2\neq 0,$ clearly, $\Delta$ is a non-square element in $\mathbb{F}_{5^k}$ since $2$ is non-square. Thus Eq. \eqref{eq4.2.1} has no solution in $\mathbb{F}_{5^k}$.

{\bf Subcase 1.2:}
If $c^2+2c+2=0,$ we have $c=2$ or $c=1$.

\begin{enumerate}
  \item If $c=2,$ we get $d^2=-2$. Then we have
  \begin{equation}\label{eqn2.1}
  \begin{cases}
  xy=1, \\
  x+y=-1.
  \end{cases}
  \end{equation}
  So $x,y$ are two roots of $u^2+u+1=0.$ However, $u^2+u+1$ is irreducible over $\mathbb{F}_{5^k}$ by Lemma \ref{lem0}, since the discriminant $\Delta=2$ is non-square. We get a contradiction.

  \item If $c=1$, we have $d^2=-1$. Then \begin{equation}\label{eqn2.2}
  \begin{cases}
  xy=-1, \\
  x+y=2.
  \end{cases}
  \end{equation}
  So $x,y$ are two roots of $u^2-2u-1=0.$ However, $u^2-2u-1$ is irreducible over $\mathbb{F}_{5^k}$ by Lemma \ref{lem0}, since the discriminant $\Delta=3$ is non-square. We get a contradiction.

\end{enumerate}

{\bf Case 2:}
For the case
$$\frac{4a^5-2a^3+2a}{4a^4+2a^2+2}=- \frac{4b^5-2b^3+2b}{4b^4+2b^2+2}.$$

After simple computations, we have
$$(a+b)\big( a^4b^4-2(a+b)^4-2(a^2b^2+ab+2)(a^2-ab+b^2)+2a^3b^3-4a^2b^2+4ab+4 \big)=0.$$
Let $c=ab$, $d=a+b$. Noting that $a\neq -b$, we have
$$c^4-2d^4-2(c^2+c+2)(d^2+2c)+2c^3-4c^2+4c+4=0.$$
Simplifying the above equation gives
$$d^4+(c^2+c+2)d^2+2c^4+c^3-c^2+2c-2=0.$$
Let $z=d^2\in\mathbb{F}_{5^k}^*$, we have
\begin{equation}\label{eq4.3.2}
z^2+(c^2+c+2)z+2c^4+c^3-c^2+2c-2=0.
\end{equation}
By Lemma \ref{lem2.1}, we know $z^2+(c^2+c+2)z+2c^4+c^3-c^2+2c-2$ is irreducible over $\mathbb{F}_{5^k}$ if and only if the discriminant $\Delta$ is non-square. It is easy to obtain $\Delta=3(c^2-2c+2)^2.$

{\bf Subcase 2.1:}
If $c^2-2c+2\neq 0,$ clearly, $\Delta$ is a non-square element in $\mathbb{F}_{5^k}$ since $2$ is non-square. Thus Eq. \eqref{eq4.3.2} has no solution in $\mathbb{F}_{5^k}$.

{\bf Subcase 2.2:}
If $c^2-2c+2=0,$ we have $c=-1$ or $c=3$.

\begin{enumerate}
  \item If $c=-1,$ we get $d^2=-1$. Then we have
  \begin{equation}\label{eqn3.1}
  \begin{cases}
  xy=-1, \\
  x+y=2.
  \end{cases}
  \end{equation}
  So $x,y$ are two roots of $u^2-2u-1=0.$ However, $u^2-2u-1$ is irreducible over $\mathbb{F}_{5^k}$ by Lemma \ref{lem0}, since the discriminant $\Delta=3$ is non-square. We get a contradiction.
  \item If $c=3$, we have $d^2=-2$. Then \begin{equation}\label{eqn3.2}
  \begin{cases}
  xy=1, \\
  x+y=-1.
  \end{cases}
  \end{equation}
  So $x,y$ are two roots of $u^2+u+1=0.$ However, $u^2+u+1$ is irreducible over $\mathbb{F}_{5^k}$ by Lemma \ref{lem0}, since the discriminant $\Delta=2$ is non-square. We get a contradiction.
\end{enumerate}

\end{proof}

\begin{proof}[Proof of Theorem \ref{thm4.1}]
Noting that $\Omega_{1}\cap \Omega_{2}=\varnothing$, $\Omega_{1}\cup \Omega_{2}=\mathbb{F}_{5^k}^{*}$. Clearly, $f(\Omega_{1})\subseteq \Omega_{1}$ and $f(\Omega_{2})\subseteq \Omega_{2}.$ It is sufficient to prove that $f(x)$ permutates $\Omega_{1},\Omega_{2},$ respectively. Then, it can be derived directly from Lemmas \ref{lemconj1.1} and \ref{lemconj1.2}.
\end{proof}

\section{PPs of the form $x+\gamma \textup{Tr}_{n}(x^k)$}\label{trapp}

In \cite{kyureghyan2016}, the authors computed all PPs over fields $\mathbb{F}_{q^n}$ of the form $x+\gamma \textup{Tr}_{n}(x^k)$ with $\gamma\in \mathbb{F}_{q^n}^*,$ $n>1$ and $q^n<5000.$ They gave several families of PPs, which explain almost all PPs of this form. However, the following five examples arising in their computation are not covered:
\begin{example}\label{ex5.1}
$q=7,~ n=2, ~k=10, ~\gamma^4=1.$
\end{example}

\begin{example}\label{ex5.2}
$q=9,~ n=2,~ k=33,~ \gamma^2-\gamma=1.$
\end{example}

\begin{example}\label{ex5.3}
$q=27, ~n=2,~ k=261,~ (\gamma-1)^{13}=\gamma^{13}.$
\end{example}

\begin{example}\label{ex5.4}
$q=9,~n=3, ~k=\{11, 19, 33, 57\},~ \gamma^4=-1.$
\end{example}

\begin{example}\label{ex5.5}
$q=49,~ n=2,~ k=385, ~\gamma^5=-1.$
\end{example}

In the following theorem, we generalize Examples \ref{ex5.2} and \ref{ex5.3} into a new infinite class.

\begin{theorem}
Let $q=3^r$ with $r \geq 2$, and $n=2,$ $k=3^{2r-1}+3^r-3^{r-1}.$
Then $f(x)=x+\gamma \textup{Tr}_{2}(x^k)$ is a PP over $\mathbb{F}_{q^{2}}$, where $\gamma\in \mathbb{F}_{q^{2}}$ satisfying $(\gamma-1)^{\frac{q-1}{2}}=\gamma^{\frac{q-1}{2}}$.
\end{theorem}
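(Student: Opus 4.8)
Let $q = 3^r$ with $r \geq 2$, $n = 2$, and $k = 3^{2r-1} + 3^r - 3^{r-1}$. Writing $k$ in terms of $q$, note $3^{2r-1} = q^2/3$, $3^r = q$, and $3^{r-1} = q/3$, so $k \equiv \tfrac{q^2}{3} + q - \tfrac{q}{3}$; working modulo $q^2 - 1$ and using $q^2 \equiv 1$, one finds $x^k = x^{q \cdot m}$ (up to the Frobenius $x \mapsto x^q$) for a suitable small exponent $m$, so that $\textup{Tr}_2(x^k) = x^k + x^{kq}$ has a manageable closed form. The plan is first to simplify $x^k$ on $\mathbb{F}_{q^2}^*$: since $x^{q^2-1}=1$ there, $x^k$ equals a power of $x$ that splits as $x^{a}\cdot \overline{x}^{b}$ with $\overline{x} = x^q$, and I expect $x^k = x\cdot \overline{x}^{\,2} \cdot (x\overline{x})^{-1}\cdot(\text{something})$ — in other words, $x^k$ should reduce to an expression involving only $x$, $\overline{x}$, and the norm $N(x) = x^{q+1}=x\overline{x}\in\mathbb{F}_q^*$. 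The key observation driving the whole proof is that $\textup{Tr}_2(x^k)$ should turn out to depend on $x$ only through its norm (a standard phenomenon for these Niho-type exponents), so that $\textup{Tr}_2(x^k) = \phi(N(x))$ for an explicit rational function $\phi$ of the norm.

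**Reducing injectivity to a condition on the norm.** Suppose $f(x) = f(y)$ with $x,y \in \mathbb{F}_{q^2}$. Then $x - y = \gamma\big(\textup{Tr}_2(y^k) - \textup{Tr}_2(x^k)\big)$. If $\textup{Tr}_2(x^k) = \phi(N(x)) \in \mathbb{F}_q$, the right side is $\gamma$ times an element of $\mathbb{F}_q$. Apply the Frobenius $z\mapsto z^q$ and subtract: $x - y - (\overline{x} - \overline{y}) = (\gamma - \overline{\gamma})\cdot t$ where $t = \phi(N(y)) - \phi(N(x)) \in \mathbb{F}_q$. Combining these two linear relations (one from $f(x)=f(y)$, one from its conjugate) lets me solve for $x - y$ as $\tfrac{\gamma - \overline\gamma\, u}{\,?\,}\, t$ type expression, and crucially it forces $N(x) = N(y)$ unless $t \neq 0$ leads to a contradiction; the condition $(\gamma-1)^{(q-1)/2} = \gamma^{(q-1)/2}$ says precisely that $\gamma$ and $\gamma - 1$ lie in the same coset of the squares of $\mathbb{F}_{q^2}^*$ relative to $\mathbb{F}_q$ — equivalently $\gamma/(\gamma-1)$ is a square in $\mathbb{F}_{q^2}$ — and this is the arithmetic input that will make a certain discriminant a non-square, ruling out the bad case. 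So the strategy is: (i) show $N(x) = N(y)$; then (ii) with the norm fixed, show $\textup{Tr}_2(x^k) = \textup{Tr}_2(y^k)$, whence $x = y$ directly from $f(x) = f(y)$.

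**Carrying out step (i) and step (ii).** For step (i), after eliminating $x-y$ I should land on an equation of the form $P(N(x), N(y)) = 0$ for a polynomial $P$, symmetric-ish in a useful way, and I will factor out $(N(x) - N(y))$ and argue the remaining factor has no roots with $N(x), N(y) \in \mathbb{F}_q^*$ — this is where Lemma~\ref{lem2.1} enters: the cofactor, viewed as a quadratic in $N(x)$ over $\mathbb{F}_q(N(y))$, will have discriminant a non-square precisely because $2$ (or whichever fixed constant appears) is a non-square in $\mathbb{F}_q = \mathbb{F}_{3^r}$, mirroring the discriminant computations of Sections~\ref{conj2}--\ref{conj1}. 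For step (ii), once $N(x)=N(y)=:N$, the relation $f(x)=f(y)$ reads $x - y = \gamma(\phi'(N) \cdot(\text{conjugate part}))$ and conjugating gives a second relation; the $2\times 2$ system in $(x-y, \overline x - \overline y)$ has determinant $\gamma\overline\gamma^{\,?} - 1$ (something like $N(\gamma)$-related), which is nonzero under the hypothesis on $\gamma$, forcing $x = y$. The main obstacle I anticipate is the very first step: correctly reducing $x^k$ modulo $q^2-1$ and verifying that $\textup{Tr}_2(x^k)$ genuinely collapses to a function of the norm alone — if instead it retains genuine $x$-versus-$\overline x$ dependence, the whole "reduce to the norm" architecture fails and one needs the heavier machinery of counting solutions on $\mu_{q+1}$. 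I would therefore spend most of the effort pinning down the exponent arithmetic of $k = 3^{2r-1} + 3^r - 3^{r-1}$ and confirming the norm-collapse, after which the rest is a discriminant computation of exactly the flavour already executed twice in this paper.
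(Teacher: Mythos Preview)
Your architecture rests on the hope that $\textup{Tr}_2(x^k)$ collapses to a function of the norm $N(x)=x\overline{x}$ alone, and you correctly flag this as the make-or-break step. Unfortunately it fails. Computing $3k = 3^{2r}+3^{r+1}-3^r = q^2+2q \equiv 1+2q \pmod{q^2-1}$ gives $x^{3k}=x\overline{x}^{\,2}$, hence
\[
\bigl(\textup{Tr}_2(x^k)\bigr)^3 \;=\; x^{3k}+\overline{x}^{\,3k} \;=\; x\overline{x}^{\,2}+\overline{x}x^2 \;=\; N(x)\cdot\textup{Tr}_2(x),
\]
which depends on the \emph{trace} of $x$ as well as its norm. (Concretely, for $x\in\mu_{q+1}$ one has $N(x)=1$ but $\textup{Tr}_2(x^k)=x^3+x^{-3}$ is non-constant.) So the ``reduce to the norm'' plan cannot be salvaged, and the downstream discriminant argument you sketch never gets off the ground. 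A secondary issue: you interpret the hypothesis $(\gamma-1)^{(q-1)/2}=\gamma^{(q-1)/2}$ as a square condition in $\mathbb{F}_{q^2}$, but in fact it first forces $\gamma\in\mathbb{F}_q$ (both sides are $(q-1)/2$-th powers, hence $(q+1)$-st roots of unity in $\mathbb{F}_{q^2}$, and equality of these for $\gamma$ and $\gamma-1$ pins $\gamma$ down to $\mathbb{F}_q$), and then says $\gamma/(\gamma-1)$ is a square in $\mathbb{F}_q$.

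The paper's proof exploits exactly the cubing identity above, but in a different direction: it cubes the equation $f(x)=a$ to get $x^3+\gamma^3\,N(x)\,\textup{Tr}_2(x)=a^3$, uses $\gamma\in\mathbb{F}_q$ to subtract the $q$-th-power conjugate, and obtains $(x-\overline{x})^3=(a-\overline{a})^3$, whence $\overline{x}=x+\overline{a}-a$. Substituting back yields a single cubic $x^3+\bigl(\tfrac{\gamma}{1-\gamma}\bigr)^3(\overline{a}-a)^2 x=\bigl(\tfrac{a}{1-\gamma}\bigr)^3$ over $\mathbb{F}_{q^2}$, together with the constraint $\overline{x}-x=\overline{a}-a$. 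Two distinct solutions would differ by some $c\in\mathbb{F}_q^*$, and in characteristic~$3$ the cubic then has all three roots $x_2-c,\,x_2,\,x_2+c$, forcing $c^2=\bigl(\tfrac{\gamma}{\gamma-1}\bigr)^3(\overline{a}-a)^2$. The square condition on $\gamma/(\gamma-1)$ now gives $c=\pm d(\overline{a}-a)$ with $d\in\mathbb{F}_q$, and conjugating shows $\overline{c}=-c$, contradicting $c\in\mathbb{F}_q^*$. The moral: the right move is not to isolate the norm but to \emph{cube first}, linearizing the trace term into $N(x)\,\textup{Tr}_2(x)$, and then separate $x$ from $\overline{x}$ via the conjugate equation.
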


\begin{proof}
We will show that $f(x)=a$ has a unique nonzero solution for each $a\in \mathbb{F}_{q^2}$. That is, for the equation
\begin{equation}\label{eq:pp1}
x+\gamma (x^k+\overline{x}^k)=a,
\end{equation}
there exists a unique  solution $x\in
\mathbb{F}_{q^2}.$
Raising both sides of Eq. \eqref{eq:pp1} to the $3$-th power, noting that $\overline{x}^q=x^{q^2}=x,$ we get
\begin{equation}\label{eq:pp2}
x^{3}+\gamma^3 (x\overline{x}^2+\overline{x}x^2)=a^3,
\end{equation}
Since $(\gamma-1)^{\frac{q-1}{2}}=\gamma^{\frac{q-1}{2}},$ we can easily obtain $\gamma \in \mathbb{F}_q.$ Raising both sides of Eq. \eqref{eq:pp2} to the $q$-th power
\begin{equation}\label{eq:pp3}
\overline{x}^{3}+\gamma^3 (\overline{x}x^2+x\overline{x}^2)=\overline{a}^3.
\end{equation}
By Eq. \eqref{eq:pp2} and Eq. \eqref{eq:pp3}, we have $(x-\overline{x})^3=(a-\overline{a})^3.$ Since $\textup{gcd}(3,q^2-1)=1,$ we get
\begin{equation}\label{eq:pp4}
\overline{x}=x+\overline{a}-a.
\end{equation}
Substituting Eq. \eqref{eq:pp4} into Eq. \eqref{eq:pp2}, after some simple computation, we obtain
\begin{equation}\label{eq:pp5}
x^3+\Big(\frac{\gamma}{1-\gamma}\Big)^3(\overline{a}-a)^2x=\Big(\frac{a}{1-\gamma}\Big)^3.
\end{equation}
Therefore, $x$ is a solution of Eq. \eqref{eq:pp1} if and only if it is a solution to following equations
\begin{equation}\label{eqs1}
\left\{
\begin{array}{rcl}
x^3+\Big(\frac{\gamma}{1-\gamma}\Big)^3(\overline{a}-a)^2x&=&\Big(\frac{a}{1-\gamma}\Big)^3, \\[1.0ex]
\overline{x}-x&=&\overline{a}-a.
\end{array}\right.
\end{equation}

Next, we will prove that Eq. \eqref{eqs1} has at most one solution. If it has two distinct solutions, denoted by $x_1,x_2.$ By $\overline{x_1}-x_1=\overline{a}-a=\overline{x_2}-x_2,$ we get $\overline{x_1-x_2}=x_1-x_2,$ that is $x_1-x_2\in \mathbb{F}_q.$ Write $c=x_1-x_2\in \mathbb{F}_{q}^*,$ we have $x_2+c, x_2, x_2-c$ are three solutions to the first equation of Eq. \eqref{eqs1}. Then, we obtain $c^2=\Big(\frac{\gamma}{\gamma-1}\Big)^3(\overline{a}-a)^2.$ Note that $Y=\{\gamma\in \mathbb{F}_{q^2} |(\gamma-1)^{\frac{q-1}{2}}=\gamma^{\frac{q-1}{2}}\}\subset \mathbb{F}_q^{*}.$ Let $z=\frac{\gamma}{\gamma-1},$ we have $Z=\{z \in \mathbb{F}_{q^2} | z^{\frac{q-1}{2}}=1\}=\langle \omega^{2(q+1)} \rangle \setminus\{1\}\subset \mathbb{F}_q^{*},$ where $w$ is a primitive element of $\mathbb{F}_{q^2}.$

Thus we have $c=\pm d(\overline{a}-a),$ where $d=w^{3j(q+1)} \in \mathbb{F}_q$ for some $j.$ Below, we consider the two cases where $c=d(\overline{a}-a)$ and $c=-d(\overline{a}-a)$.

{\bf Case 1: }
Raising both sides of $c=d(\overline{a}-a)$ to the $q$-th power, we have $\overline{c}=d(a-\overline{a})=-c,$ which leads to a contradiction since $c\in \mathbb{F}_{q}^*.$

{\bf Case 2: }
Raising both sides of $c=-d(\overline{a}-a)$ to the $q$-th power, we have $\overline{c}=-d(a-\overline{a})=-c,$ which leads to a contradiction since $c\in \mathbb{F}_{q}^*.$

This completes the proof.
\end{proof}

\begin{remark}
When $q=9,~ n=2,~ k=33,$ the condition on $\gamma$ in the above theorem is a little different from $\gamma^2-\gamma=1.$ Noting that $(\gamma-1)^4=\gamma^4$ implies $(\gamma+1)(\gamma^2-\gamma-1)=0.$ It follows that $\gamma=-1$ is also a suitable solution, which is contained in a class of PPs proposed in \cite{kyureghyan2016}.
\end{remark}

\section{Conclusion}\label{conclusion}
This paper demonstrates some new results on permutation polynomials. We prove two conjectures on permutation polynomial proposed recently by Wu and Li \cite{WuL}. Moreover, we give a new class of trinomial PPs of the form $x+\gamma \textup{Tr}_{n}(x^k)$, which generalizes two examples of \cite{kyureghyan2016}, namely, Examples \ref{ex5.2} and \ref{ex5.3}.

One question remains at the end of this paper: How to extend Examples \ref{ex5.1}, \ref{ex5.4} and \ref{ex5.5} to infinite classes? If it can be solved, we would have a complete understanding on PPs over fields $\mathbb{F}_{q^n}$ of the form $x+\gamma \textup{Tr}_{n}(x^k)$ with $\gamma\in \mathbb{F}_{q^n}^*,$ $n>1$ and $q^n<5000.$ Due to this, we propose the following problem:

\begin{problem}
Is it possible to extend Examples \ref{ex5.1}, \ref{ex5.4} and \ref{ex5.5} to infinite classes with the form $x+\gamma \textup{Tr}_{n}(x^k)$?
\end{problem}

\end{document}